 \newtheorem{thm}{Theorem}[section]
 \newtheorem{cor}[thm]{Corollary}
 \newtheorem{prop}[thm]{Proposition}
 \theoremstyle{definition}
 \newtheorem{defn}[thm]{Definition}
 \newtheorem{ex}[thm]{Example}
 \theoremstyle{remark}
 \numberwithin{equation}{section}
\begin{document}

%
%
%
%
%
%
%
%
%

\title[Some properties of  dual and approximate dual of fusion frames]
 {Some properties of  dual and approximate dual of fusion frames}


\author[A. A. Arefijamaal]{Ali Akbar Arefijamaal}
\address{Department of Mathematics and Computer Sciences, Hakim Sabzevari University, Sabzevar, Iran.}
\email{arefijamaal@hsu.ac.ir; arefijamaal@gmail.com}

\author[F. Arabyani Neyshaburi]{Fahimeh Arabyani Neyshaburi}
\address{Department of Mathematics and Computer Sciences, Hakim Sabzevari University, Sabzevar, Iran.}
\email{f.arabyani@hsu.ac.ir}

\subjclass{Primary 42C15; Secondary 41A58}

\vspace{2cm}
\begin{abstract}

 In this paper we extend the notion of approximate dual to fusion frames and present some approaches to obtain dual and approximate alternate dual fusion frames. Also, we study the stability of dual and approximate alternate dual fusion frames.
\end{abstract}

\maketitle
\textbf{Key words:} Fusion frames; alternate dual fusion frames; approximate alternate  duals; Riesz fusion bases.

\section{Introduction and preliminaries}

Fusion frame theory is a natural generalization of frame theory in separable Hilbert spaces, which is introduced by  Casazza and  Kutyniok in \cite{Cas04} and also  Fornasier in \cite{Forna}. Fusion frames are applied to signal processing, image processing, sampling theory, filter banks and a variety of applications that cannot be modeled by discrete frames \cite{sensor, hear}.

Let $I$ be a countable index set, recall that a sequence $\{f_{i}\}_{i\in I}$ is a \textit{frame} in a separable Hilbert space $\mathcal{H}$ if there exist constants
 $0<A\leq B<\infty$ such that
\begin{eqnarray}\label{Def frame}
A\|f\|^{2}\leq \sum_{i\in I}|\langle f,f_{i}\rangle|^{2}\leq
B\|f\|^{2},\qquad (f\in \mathcal{H}).
\end{eqnarray}
The constants $A$ and $B$ are called the \textit{lower} and \textit{upper frame bounds}, respectively.
It is said that $\{f_{i}\}_{i\in I}$ is a \textit{Bessel sequence} if the  right  inequality in  (\ref{Def frame}) is satisfied. Given a
frame $\{f_{i}\}_{i\in I}$, the \textit{frame operator} is defined by
\begin{eqnarray*}
Sf=\sum_{i\in I}\langle f,f_{i}\rangle f_i, \qquad (f\in \mathcal{H}).
\end{eqnarray*}
 It is a bounded, invertible, and self-adjoint
operator \cite{Chr08}.
The family $\{S^{-1}f_{i}\}_{i\in I}$ is also a frame for $\mathcal{H}$, the so called the \textit{canonical dual} frame. In general, a Bessel sequence $\{g_i\}_{i\in I}\subseteq\mathcal{H}$ is called an alternate \textit{dual} or simply a \textit{dual}
 for the Bessel sequence $\{f_{i}\}_{i\in I}$ if
\begin{eqnarray}\label{Def Dual}
f=\sum_{i\in I}\langle f,g_{i}\rangle f_i ,\qquad (f\in
\mathcal{H}).
\end{eqnarray}

The \textit{synthesis operator} $T: l^{2}\rightarrow \mathcal{H}$ of a Bessel sequence $\lbrace f_{i}\rbrace_{i\in I}$ is defined by $T\lbrace c_{i}\rbrace_{i\in I}= \sum_{i\in I} c_{i}f_{i}$. By (\ref{Def Dual}) two Bessel sequences $\{f_i\}_{i\in I}$ and $\{g_i\}_{i\in I}$ are dual of each other if and only if $T_GT_F^{*}=I_{\mathcal{H}}$, where $T_F$ and $T_G$ are the synthesis operators $\{f_i\}_{i\in I}$ and $\{g_i\}_{i\in I}$, respectively. For more
details on the frame theory we refer to \cite{Ca04, Chr08}.

Now we review basic definitions and primary results of fusion frames. Throughout this paper,
$\pi_V$ denotes the orthogonal projection from Hilbert space $\mathcal{H}$ onto a closed subspace $V$.
\begin{defn}
 Let $\{W_i\}_{i\in I}$ be a family of closed subspaces of $\mathcal{H}$ and $\{\omega_i\}_{i\in I}$ a family of
weights, i.e. $\omega_i>0$, $i\in I$. Then $\{(W_i,\omega_i)\}_{i\in
I}$ is called a \textit{fusion frame} for $\mathcal{H}$ if there exist the
constants $0<A\leq B<\infty$ such that
\begin{eqnarray}\label{Def. fusion}
A\|f\|^{2}\leq \sum_{i\in I}\omega_i^2\|\pi_{W_i}f\|^2\leq
B\|f\|^{2},\qquad (f\in \mathcal{H}).
\end{eqnarray}
\end{defn}
The constants $A$ and $B$ are called the \textit{fusion frame
bounds}. If we only have the upper bound in (\ref{Def. fusion}) we
call $\{(W_i,\omega_i)\}_{i\in I}$ a \textit{Bessel fusion
sequence}. A fusion frame is called $A$-\textit{tight}, if $A=B$, and \textit{Parseval} if $A= B= 1$. If
$\omega_i=\omega$ for all $i\in I$, the collection
$\{(W_i,\omega_i)\}_{i\in I}$ is called \textit{$\omega$-uniform} and we abbreviate $1$- uniform fusion frames as $\{W_i\}_{i\in I}$. A family of closed subspaces $\{W_i\}_{i\in I}$ is called an orthonormal basis for $\mathcal{H}$ when $\oplus_{i\in I} W_{i} = \mathcal{H}$ and it is a \textit{Riesz decomposition} of $\mathcal{H}$,
if for every $f\in\mathcal{H}$ there is a unique choice of $f_i\in
W_i$ such that $f =\sum_{i\in I}f_i$. Also a family of closed subspaces $\{W_i\}_{i\in I}$ is called a Riesz fusion basis whenever it is complete for $\mathcal{H}$ and there exist positive constants $A$, $B$ such that for every finite subset $J\subset I$ and arbitrary vector $f_{i}\in W_i$, we have
\begin{eqnarray*}\label{Def fusion}
A\sum_{i\in J}\| f_i\|^{2} \leq \| \sum_{i\in J} f_i\|^{2} \leq B\sum_{i\in J}\| f_i\|^{2}.
\end{eqnarray*}

It is clear that every Riesz fusion basis is a $1$- uniform fusion frame for $\mathcal{H}$, and also a fusion frame is a Riesz  basis if and only if it is a Riesz decomposition for $\mathcal{H}$, see \cite{Asgari, Cas04}.

For every fusion frame a useful local frame is proposed in the following theorem.
\begin{thm}\cite{Cas04}\label{3.5}
 Let $\{W_i\}_{i\in I}$  be a family of subspaces in $\mathcal{H}$ and $\lbrace \omega_{i}\rbrace_{i\in I}$ a family of weights. Then $\{(W_{i},\omega_{i})\}_{i\in I}$ is a fusion frame for $\mathcal{H}$ with bounds $A$ and $B$, if and only if $\{\omega_{i}\pi_{W_{i}}e_{j}\}_{i\in I,j\in J}$ is a frame for $\mathcal{H}$, with the same bounds, where $\{e_{j}\}_{j\in J}$
is an orthonormal basis for $\mathcal{H}$.
\end{thm}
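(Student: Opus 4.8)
The plan is to reduce both conditions to a single pointwise identity and let the bounds transfer verbatim. The heart of the matter is the claim that for every $f\in\mathcal{H}$,
\begin{eqnarray*}
\sum_{i\in I}\sum_{j\in J}|\langle f,\omega_i\pi_{W_i}e_j\rangle|^2=\sum_{i\in I}\omega_i^2\|\pi_{W_i}f\|^2.
\end{eqnarray*}
Once this is established, comparing it with the defining inequality (\ref{Def. fusion}) of a fusion frame and with the defining inequality (\ref{Def frame}) of an ordinary frame yields the equivalence, together with the identical bounds $A$ and $B$, in both directions simultaneously.

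To prove the identity I would first fix $i\in I$ and pull out the weight, writing $|\langle f,\omega_i\pi_{W_i}e_j\rangle|^2=\omega_i^2|\langle f,\pi_{W_i}e_j\rangle|^2$. Since $\pi_{W_i}$ is an orthogonal projection it is self-adjoint, so $\langle f,\pi_{W_i}e_j\rangle=\langle \pi_{W_i}f,e_j\rangle$. Summing over $j\in J$ and invoking Parseval's identity for the orthonormal basis $\{e_j\}_{j\in J}$ then gives $\sum_{j\in J}|\langle \pi_{W_i}f,e_j\rangle|^2=\|\pi_{W_i}f\|^2$. Multiplying by $\omega_i^2$ and summing over $i\in I$ produces the claimed identity; the interchange of the order of summation is harmless because every summand is non-negative.

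Since the two quadratic forms are literally equal for each $f$, the lower estimate $A\|f\|^2\leq\cdots$ holds for one expression exactly when it holds for the other, and likewise the upper estimate with bound $B$. Hence $\{(W_i,\omega_i)\}_{i\in I}$ is a fusion frame with bounds $A,B$ if and only if $\{\omega_i\pi_{W_i}e_j\}_{i\in I,j\in J}$ is a frame with the same bounds. I do not anticipate a genuine obstacle: the only points needing a word of care are the legitimacy of reordering the double sum, which is immediate from non-negativity, and the implicit observation that the particular choice of orthonormal basis $\{e_j\}_{j\in J}$ is irrelevant, since Parseval's identity holds for \emph{every} orthonormal basis.
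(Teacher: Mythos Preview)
Your argument is correct and is exactly the standard one: the self-adjointness of $\pi_{W_i}$ together with Parseval's identity for $\{e_j\}_{j\in J}$ gives the pointwise equality of the two quadratic forms, so the frame and fusion frame inequalities coincide with identical bounds. The paper does not supply its own proof of this theorem---it is quoted from \cite{Cas04}---and the proof there proceeds precisely along the lines you describe.
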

Also, a connection between local and global properties is given in the next result, see \cite{Cas04}.
\begin{thm} \label{fusion 2}
For each $i\in I$, let $W_{i}$ be a closed subspace of $\mathcal{H}$ and $\omega_{i}>0$. Also let $\lbrace f_{i,j}\rbrace_{j\in J_{i}}$ be a frame for $W_{i}$ with frame bounds $\alpha_{i}$ and $\beta_{i}$ such that
\begin{eqnarray}\label{sup}
 0 < \alpha=inf_{i\in I} \alpha_{i} \leq \beta=sup_{i\in I}\beta_{i} < \infty.
\end{eqnarray}
 Then the following conditions are equivalent.
\item $(i)$
$\lbrace (W_{i},\omega_{i})\rbrace_{i\in I }$ is a fusion frame of $\mathcal{H}$ with bounds $C$ and $D$.
\item $(ii)$
$\lbrace \omega_{i}f_{i,j}\rbrace_{i\in I , j\in J_{i}}$ is a frame of $\mathcal{H}$  with bounds $\alpha C$ and $\beta D$, .
\end{thm}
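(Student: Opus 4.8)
The plan is to reduce the claimed equivalence to a single pointwise double inequality relating the fusion-frame quantity $\sum_{i\in I}\omega_i^2\|\pi_{W_i}f\|^2$ to the frame quantity $\sum_{i\in I,\,j\in J_i}|\langle f,\omega_i f_{i,j}\rangle|^2$, and then to read off both implications from it. The whole argument rests on one elementary observation: since each $f_{i,j}$ lies in $W_i$, we have $\langle f,f_{i,j}\rangle=\langle \pi_{W_i}f,f_{i,j}\rangle$ for every $f\in\mathcal{H}$, so the local frame inequality for the frame $\{f_{i,j}\}_{j\in J_i}$ of $W_i$ may legitimately be applied to the vector $\pi_{W_i}f\in W_i$.

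First I would fix $f\in\mathcal{H}$ and $i\in I$ and apply the frame bounds $\alpha_i,\beta_i$ of $\{f_{i,j}\}_{j\in J_i}$ to the vector $\pi_{W_i}f$, which together with the projection identity gives
\[
\alpha_i\|\pi_{W_i}f\|^2\le\sum_{j\in J_i}|\langle f,f_{i,j}\rangle|^2\le\beta_i\|\pi_{W_i}f\|^2.
\]
Multiplying through by $\omega_i^2$ and using $|\langle f,\omega_i f_{i,j}\rangle|^2=\omega_i^2|\langle f,f_{i,j}\rangle|^2$, then summing over $i$ (all terms are nonnegative, so the order of summation is irrelevant) and invoking the uniform estimates $\alpha\le\alpha_i$ and $\beta_i\le\beta$ from (\ref{sup}), I arrive at the key double inequality
\[
\alpha\sum_{i\in I}\omega_i^2\|\pi_{W_i}f\|^2\le\sum_{i\in I,\,j\in J_i}|\langle f,\omega_i f_{i,j}\rangle|^2\le\beta\sum_{i\in I}\omega_i^2\|\pi_{W_i}f\|^2,
\]
valid for every $f\in\mathcal{H}$.

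With this inequality in hand both directions are immediate. For $(i)\Rightarrow(ii)$, if $C\|f\|^2\le\sum_{i}\omega_i^2\|\pi_{W_i}f\|^2\le D\|f\|^2$, the left half of the key inequality produces the lower frame bound $\alpha C$ and the right half produces the upper frame bound $\beta D$ for $\{\omega_i f_{i,j}\}$; the finiteness of the middle sum simultaneously confirms the Bessel property. For $(ii)\Rightarrow(i)$, dividing the key inequality by the positive finite constants $\alpha,\beta$ sandwiches $\sum_{i}\omega_i^2\|\pi_{W_i}f\|^2$ between $\tfrac1\beta$ and $\tfrac1\alpha$ times the frame expression, so the frame bounds $\alpha C,\beta D$ of $\{\omega_i f_{i,j}\}$ yield genuine fusion-frame bounds for $\{(W_i,\omega_i)\}$, establishing $(i)$. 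I do not expect a serious obstacle here: the entire content is the projection identity combined with the uniform control (\ref{sup}), which is exactly what keeps the per-subspace constants from degenerating when summed over the possibly infinite index set $I$. The only point requiring care is the bookkeeping of the constants, ensuring that the factors $\alpha,\beta$ land on the correct side so that the bounds $\alpha C$ and $\beta D$ emerge in the forward direction; this is why the hypothesis $0<\alpha\le\beta<\infty$ is indispensable rather than merely convenient.
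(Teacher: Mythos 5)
Your proof is correct and is exactly the standard argument: the paper itself states this result without proof (quoting it from \cite{Cas04}), and your reduction via the identity $\langle f,f_{i,j}\rangle=\langle\pi_{W_i}f,f_{i,j}\rangle$ to the pointwise double inequality
$\alpha\sum_{i}\omega_i^2\|\pi_{W_i}f\|^2\le\sum_{i,j}|\langle f,\omega_i f_{i,j}\rangle|^2\le\beta\sum_{i}\omega_i^2\|\pi_{W_i}f\|^2$
is precisely the proof given in that reference. The only caveat is cosmetic: in the direction $(ii)\Rightarrow(i)$ your argument recovers fusion-frame bounds $\alpha C/\beta$ and $\beta D/\alpha$ rather than $C$ and $D$ themselves, an imprecision already present in the theorem's phrasing (and handled in the original source by stating the bound transfer separately for each direction) rather than a flaw in your reasoning.
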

Recall that for each sequence $\{W_i\}_{i\in I}$ of closed subspaces
in $\mathcal{H}$, the space
\begin{eqnarray*}
\sum_{i\in I}\oplus W_{i} =\{\{f_i\}_{i\in I}:f_i\in W_i,
\sum_{i\in I}\|f_i\|^2<\infty\},
\end{eqnarray*}
with the inner product
\begin{eqnarray*}
\langle \{f_i\}_{i\in I},\{g_i\}_{i\in I} \rangle=\sum_{i\in
I}\langle f_i,g_i \rangle,
\end{eqnarray*}
is a Hilbert space.
For a Bessel fusion sequence $\{(W_i,\omega_i)\}_{i\in I}$ of
$\mathcal{H}$, the \textit{synthesis operator} $T_{W}: \sum_{i\in
I}\oplus W_{i} \rightarrow\mathcal{H}$ is defined by
\begin{equation*}
T_{W}(\{f_i\}_{i\in I})=\sum_{i\in I}\omega_if_i,\qquad
(\{f_{i}\}_{i\in I}\in \sum_{i\in I}\oplus W_{i}).
\end{equation*}
Its adjoint operator $T_{W}^{*}: \mathcal{H}\rightarrow \sum_{i\in
I}\oplus W_{i}$, which is called the \textit{analysis
operator}, is given by
\begin{eqnarray*}
T_{W}^{*}(f)=\{\omega_{i}\pi_{W_{i}}(f)\}_{i\in I},\qquad (f\in
\mathcal{H}).\end{eqnarray*}
Let $\{(W_i,\omega_i)\}_{i\in I}$ be a fusion
frame, the \textit{fusion frame operator}
$S_{W}:\mathcal{H}\rightarrow\mathcal{H}$ is defined by $S_{W
}f=\sum_{i\in I}\omega_i^{2}\pi_{W_i}f$ is a bounded,
invertible as well as positive. Hence, we have the following
reconstruction formula \cite{Cas04}
\begin{eqnarray*}
f=\sum_{i\in I}\omega_i^2S_W^{-1}\pi_{W_i}f,\qquad (f\in \mathcal{H}).
\end{eqnarray*}
The family $\{(S_{W}^{-1}W_i,\omega_i)\}_{i\in I}$, which is also a
fusion frame, is called the \textit{canonical dual} of
$\{(W_i,\omega_i)\}_{i\in I}$. Also, a Bessel fusion sequence $\{(V_i,\nu_i)\}_{i\in I}$ is
called an \textit{alternate dual} of $\{(W_i,\omega_i)\}_{i\in I}$, \cite{Gav06} whenever
\begin{eqnarray}\label{Def:alt}
f=\sum_{i\in I}\omega_{i}\nu_{i}\pi_{V_i}S_{W}^{-1}\pi_{W_i}f,\qquad
(f\in \mathcal{H}).
\end{eqnarray}

In \cite{Gav06}, it is proved that  every alternate dual  of a fusion frame is a fusion frame. Also, easily we can see that
a Bessel fusion sequence $\lbrace (V_{i},\upsilon_{i}) \rbrace_{i\in I}$ is an alternate dual fusion frame of $\{(W_i,\omega_i)\}_{i\in I}$ if and only if $T_{V} \phi_{vw} T^{*}_{W}=I_{\mathcal{H}}$, where the bounded operator
$\phi_{vw} : \sum_{i\in I}\bigoplus W_{i} \rightarrow \sum_{i\in I}\bigoplus V_{i}$ is given by
\begin{equation}\label{phi}
\phi_{vw}(\lbrace f_{i}\rbrace_{i\in I}) = \lbrace \pi_{V_{i}}S^{-1}_{W}f_{i}\rbrace_{i\in I}.
\end{equation}
 Moreover, a Bessel fusion sequence $V=\{(V_{i},\omega_{i})\}_{i\in I}$ given by $V_{i} = S_{W}^{-1}W_{i} \oplus U_{i}$, is  an alternate dual fusion frame of $\lbrace (W_{i}, \omega_{i})\rbrace_{i\in I}$ in which $U_{i}$ is a closed subspace of $\mathcal{H}$ for all $i\in I$, \cite{Leng}. Recently, Heineken et al.  introduced the other concept of dual fusion
frames,
\cite{Hei14}. For two fusion frames $\lbrace (W_{i}, \omega_{i})\rbrace_{i\in I}$ and $\lbrace (V_{i}, \upsilon_{i})\rbrace_{i\in I}$ if there exists a mapping  $Q\in B(\sum_{i\in I}\bigoplus W_{i},\sum_{i\in I}\bigoplus V_{i})$, such that $T_{V} Q T^{*}_{W} = I_{\mathcal{H}}$ then $\lbrace (V_{i}, \upsilon_{i})\rbrace_{i\in I}$ is called a $Q$-dual of $\lbrace (W_{i}, \omega_{i})\rbrace_{i\in I}$. Clearly every alternate  dual fusion frame is a $\phi_{vw}$-dual. $Q$-duals are useful tools for establishing reconstruction formula. For more information on fusion frames we refer the reader to \cite{Asgari, Cas04, Cas08}.

\section{Approximate duals}
Alternate dual fusion frames play a key role in fusion frame theory, however, their explicit computations seem rather intricate. In this section, we introduce the notion of approximate dual for fusion frames and discuss the existence of alternate dual fusion frames from an approximate alternate dual. Moreover, we present a complete characterization of alternate duals of Riesz fusion bases. The notion of approximate  dual for discrete frames has been already introduced by Christensen and Laugesen in \cite{app.} and then for $g$-frames in \cite{khosravi}, however many of theirs results are invalid for fusion frames. Throughout this section we consider a Riesz fusion basis as a $1$-uniform fusion frame.

First, we recall the notion of approximate dual for discrete frames. Let $F = \lbrace f_{i}\rbrace_{i\in I}$ and $G = \lbrace g_{i}\rbrace_{i\in I}$ be Bessel sequences for $\mathcal{H}$. Then $F$ and $G$ are called \textit{approximate dual frames} if $\Vert I_{\mathcal{H}} - T_GT_F^{*}\Vert < 1$. In this case $\lbrace (T_GT_F^{*})^{-1}g_i\rbrace$ is a dual of $F$, see \cite{app.}.

Now we introduce approximate duality for fusion frames.
\begin{defn}
Let $\lbrace (W_{i}, \omega_{i})\rbrace_{i\in I}$ be a  fusion frame. A Bessel fusion sequence $\lbrace (V_{i}, \upsilon_{i})\rbrace_{i\in I}$ is called an approximate alternate dual of $\lbrace (W_{i}, \omega_{i})\rbrace_{i\in I}$ if
 \begin{eqnarray*}
\Vert I_{\mathcal{H}} - T_{V} \phi_{vw} T^{*}_{W}\Vert < 1.
\end{eqnarray*}
\end{defn}
Putting
\begin{eqnarray}
 \psi_{vw} = T_{V} \phi_{vw} T^{*}_{W}.
 \end{eqnarray}
Then, we have the following reconstruction formula
 \begin{eqnarray*}
f = \sum_{i\in I} (\psi_{vw})^{-1} \omega_{i}\upsilon_{i}\pi_{V_{i}} S_{W}^{-1} \pi_{W_{i}} f = \sum_{n=0}^{\infty} (I-\psi_{vw})^{n} \psi_{vw}f, \qquad (f\in \mathcal{H}).
\end{eqnarray*}
\begin{prop}
Let $V = \lbrace (V_{i}, \upsilon_{i})\rbrace_{i\in I}$ be an approximate alternate dual of a fusion frame $W = \lbrace (W_{i}, \omega_{i})\rbrace_{i\in I}$. Then $V$ is a fusion frame.
\end{prop}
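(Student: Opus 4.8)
The plan is to produce both fusion frame bounds for $V$ by analysing the operator $\psi_{vw}=T_V\phi_{vw}T_W^{*}$. First I would observe that, since $V=\{(V_i,\upsilon_i)\}_{i\in I}$ is a Bessel fusion sequence by hypothesis, the upper bound is automatic, and the middle term of the desired inequality is exactly
\[
\sum_{i\in I}\upsilon_i^{2}\|\pi_{V_i}f\|^{2}=\|T_V^{*}f\|^{2},\qquad (f\in\mathcal{H}),
\]
because the analysis operator acts by $T_V^{*}f=\{\upsilon_i\pi_{V_i}f\}_{i\in I}$. Thus everything reduces to establishing a lower estimate for $\|T_V^{*}f\|$, which is the heart of the matter.

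Next I would use the defining inequality $\|I_{\mathcal{H}}-\psi_{vw}\|<1$ together with the Neumann series to conclude that $\psi_{vw}$ is invertible on $\mathcal{H}$ with bounded inverse $\psi_{vw}^{-1}=\sum_{n=0}^{\infty}(I_{\mathcal{H}}-\psi_{vw})^{n}$. Here the point that requires care, and which I regard as the main obstacle, is that $\psi_{vw}=T_V\phi_{vw}T_W^{*}$ carries $T_V$ on the \emph{left}, so bounding $\|\psi_{vw}f\|$ below directly would involve $T_W^{*}f$ and hence the frame $W$, not $V$. The device that overcomes this is to pass to the adjoint: $\psi_{vw}^{*}=T_W\phi_{vw}^{*}T_V^{*}$ is again invertible with $\|(\psi_{vw}^{*})^{-1}\|=\|\psi_{vw}^{-1}\|$, and now $T_V^{*}$ sits on the right, exactly where it is needed.

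The lower bound then follows by peeling the remaining bounded operators off $\psi_{vw}^{*}$. For every $f\in\mathcal{H}$,
\[
\|f\|=\bigl\|(\psi_{vw}^{-1})^{*}\,\psi_{vw}^{*}f\bigr\|\le \|\psi_{vw}^{-1}\|\,\|T_W\|\,\|\phi_{vw}\|\,\|T_V^{*}f\|,
\]
where I have used $\|\phi_{vw}^{*}\|=\|\phi_{vw}\|$ and the boundedness of $T_W$ (guaranteed since $W$ is a fusion frame) and of $\phi_{vw}$. Squaring and combining with the identity of the first paragraph yields
\[
A\|f\|^{2}\le\sum_{i\in I}\upsilon_i^{2}\|\pi_{V_i}f\|^{2},\qquad A:=\bigl(\|\psi_{vw}^{-1}\|\,\|T_W\|\,\|\phi_{vw}\|\bigr)^{-2}>0,
\]
which is the sought lower fusion frame bound. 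Together with the Bessel upper bound this shows $V$ is a fusion frame. Once the adjoint trick is in place the remaining estimates are entirely routine.
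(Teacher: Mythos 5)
Your proof is correct and follows essentially the same route as the paper: both pass to the adjoint $\psi_{vw}^{*}=T_{W}\phi_{vw}^{*}T_{V}^{*}$, use the invertibility of $\psi_{vw}$ coming from $\Vert I_{\mathcal{H}}-\psi_{vw}\Vert<1$, and bound $\Vert \psi_{vw}^{*}f\Vert$ by a constant times $\Vert T_{V}^{*}f\Vert=\bigl(\sum_{i\in I}\upsilon_{i}^{2}\Vert\pi_{V_{i}}f\Vert^{2}\bigr)^{1/2}$. The only difference is cosmetic: the paper obtains that intermediate bound by an explicit Cauchy--Schwarz estimate yielding the constant $\Vert S_{W}^{-1}\Vert^{2}B$, whereas you use submultiplicativity of operator norms to get $\Vert T_{W}\Vert\,\Vert\phi_{vw}\Vert$, which gives an equivalent lower frame bound.
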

\begin{proof}
Let $B$ and $D$ be Bessel bounds of $W$ and $V$, respectively. Then
\begin{eqnarray*}
\Vert \psi_{vw}^{*}f\Vert^{2} &=& \sup_{\Vert g\Vert=1} \vert \langle T_{W}\phi_{vw}^{*}T^{*}_{V} f, g\rangle\vert^{2}\\
&=& \sup_{\Vert g\Vert=1} \vert \langle \sum_{i\in I} \omega_{i}\upsilon_{i} \pi_{W_{i}} S_{W}^{-1} \pi_{V_{i}}f, g\rangle\vert^{2}\\
&\leq& \sup_{\Vert g\Vert=1}\sum_{i\in I} \upsilon_{i}^{2} \Vert \pi_{V_{i}}f\Vert^{2} \sum_{i\in I} \omega_{i}^{2} \Vert S_{W}^{-1} \pi_{W_{i}}g\Vert^{2} \\
&\leq& \Vert S_{W}^{-1} \Vert^{2} B \sum_{i\in I} \upsilon_{i}^{2} \Vert \pi_{V_{i}}f\Vert^{2},
\end{eqnarray*}
for every $f\in \mathcal{H}$.
This follows that
 \begin{eqnarray*}
\Vert f\Vert^{2} \dfrac{\Vert (\psi_{vw}^{-1})^{*}\Vert^{-2}}{\Vert S_{W}^{-1}\Vert^{2}B} \leq \sum_{i\in I} \upsilon_{i}^{2} \Vert \pi_{V_{i}}f\Vert^{2} \leq D \Vert f\Vert^{2}.
 \end{eqnarray*}
\end{proof}

The following proposition describes approximate duality of fusion frames with respect to local frames.
\begin{prop}\label{app.wrt.local}
Let $\lbrace e_{j}\rbrace_{j\in J}$ be an orthonormal basis of $\mathcal{H}$. Then the Bessel sequence $V = \lbrace (V_{i}, \upsilon_{i})\rbrace_{i\in I} $ is an approximate alternate dual fusion frame of a fusion frame  $W = \lbrace (W_{i}, \omega_{i})\rbrace_{i\in I}$ if and only if $\lbrace \upsilon_{i} \pi_{V_{i}}e_{j}\rbrace_{i\in I, j\in J}$ is an approximate dual of $\lbrace \omega_{i} \pi_{W_{i}} S_{W}^{-1}e_{j}\rbrace_{i\in I, j\in J}$.
\end{prop}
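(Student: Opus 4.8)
The plan is to collapse both sides of the claimed biconditional onto the single operator identity $\psi_{vw} = T_G T_F^{*}$, where $G = \{\upsilon_i \pi_{V_i} e_j\}_{i\in I, j\in J}$ and $F = \{\omega_i \pi_{W_i} S_W^{-1} e_j\}_{i\in I, j\in J}$ are the two discrete systems named in the statement. Once this identity is in hand, the definition of approximate alternate dual, $\Vert I_{\mathcal{H}} - \psi_{vw}\Vert < 1$, coincides word for word with the definition of approximate dual for the discrete Bessel sequences $F$ and $G$, namely $\Vert I_{\mathcal{H}} - T_G T_F^{*}\Vert < 1$, so the equivalence is immediate.

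First I would verify that $F$ and $G$ are genuine Bessel sequences, so that the discrete notion of approximate duality applies to them. For $G$ this is immediate from Theorem~\ref{3.5}: since $V = \{(V_i,\upsilon_i)\}_{i\in I}$ is a Bessel fusion sequence, $\{\upsilon_i \pi_{V_i} e_j\}_{i\in I, j\in J}$ is a Bessel sequence with the same bound. For $F$ the factor $S_W^{-1}$ requires a short estimate: using self-adjointness of $\pi_{W_i}$ and $S_W^{-1}$ together with Parseval's identity for the orthonormal basis $\{e_j\}$, one finds $\sum_{i,j}|\langle f, \omega_i \pi_{W_i} S_W^{-1} e_j\rangle|^2 = \sum_{i} \omega_i^{2}\Vert S_W^{-1}\pi_{W_i} f\Vert^{2} \le \Vert S_W^{-1}\Vert^{2} B\Vert f\Vert^{2}$, where $B$ is the upper fusion frame bound of $W$; hence $F$ is Bessel.

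The core step is the operator identity itself. I would write the discrete analysis operator as $T_F^{*}f = \{\langle f, \omega_i \pi_{W_i} S_W^{-1} e_j\rangle\}_{i,j}$ and then apply the discrete synthesis operator $T_G$ term by term, which, after moving $S_W^{-1}$ and $\pi_{W_i}$ to the first slot of the inner product, gives $T_G T_F^{*}f = \sum_{i,j}\omega_i\upsilon_i \langle S_W^{-1}\pi_{W_i} f, e_j\rangle\, \pi_{V_i} e_j$. The crucial manoeuvre is to sum over $j$ first: for each fixed $i$ the reconstruction formula $\sum_{j}\langle g,e_j\rangle e_j = g$ collapses the inner sum to $\pi_{V_i} S_W^{-1}\pi_{W_i} f$, leaving $T_G T_F^{*}f = \sum_{i}\omega_i\upsilon_i \pi_{V_i} S_W^{-1}\pi_{W_i} f$. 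On the other hand, unwinding the fusion operators gives $T_W^{*}f = \{\omega_i\pi_{W_i} f\}_i$, then $\phi_{vw}T_W^{*}f = \{\omega_i \pi_{V_i} S_W^{-1}\pi_{W_i} f\}_i$ by~(\ref{phi}), and finally $\psi_{vw}f = T_V\phi_{vw}T_W^{*}f = \sum_{i}\omega_i\upsilon_i \pi_{V_i} S_W^{-1}\pi_{W_i} f$. The two expressions agree, so $\psi_{vw} = T_G T_F^{*}$.

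The main obstacle I anticipate is not conceptual but a matter of careful bookkeeping: justifying the interchange of the $j$-summation with the bounded projection $\pi_{V_i}$ and its collapse through the orthonormal basis, while tracking precisely where the weights $\omega_i$, $\upsilon_i$ and the operator $S_W^{-1}$ attach. The Bessel bounds established in the second step guarantee absolute convergence of the relevant series, so these rearrangements are legitimate, and the equivalence then follows directly from $\psi_{vw} = T_G T_F^{*}$.
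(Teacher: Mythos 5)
Your proposal is correct and follows essentially the same route as the paper's proof: establish that $F$ and $G$ are Bessel (with the same $\Vert S_W^{-1}\Vert^2 B$ estimate for $F$), then verify the operator identity $T_GT_F^{*}=\psi_{vw}$ by collapsing the $j$-sum through the orthonormal basis, so that the two notions of approximate duality coincide by definition. The only cosmetic difference is that you invoke Theorem~\ref{3.5} for the Besselness of $G$ where the paper simply says ``similarly.''
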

\begin{proof}
For each $f\in\mathcal{H}$ we have
\begin{eqnarray*}
\sum_{i\in I, j\in J} \vert \langle f, \omega_{i}\pi_{W_{i}}S_{W}^{-1}e_j\rangle\vert^{2} &=& \sum_{i\in I}\sum_{j\in J} \vert \langle \omega_{i}S_{W}^{-1}\pi_{W_i}f, e_j\rangle\vert^{2}\\
&=& \sum_{i\in I} \omega_{i}^{2} \|S_{W}^{-1}\pi_{W_i}f\|^2 \\
&\leq& \|S_{W}^{-1}\|^2 \sum_{i\in I} \omega_{i}^{2} \|\pi_{W_i}f\|^2.
\end{eqnarray*}
This implies that $F = \{\omega_{i}\pi_{W_{i}}S_{W}^{-1}e_{j}\}_{i\in I, j\in J}$ is a Bessel sequence for $\mathcal{H}$. Similarly, $G = \{\upsilon_{i} \pi_{V_{i}}e_{j}\}_{i\in I, j\in J}$ is also Bessel sequence for $\mathcal{H}$. Moreover,
\begin{eqnarray*}
 T_{G} T_{F}^{*}f&=& \sum_{i\in I, j\in J}\langle f,\omega_{i}\pi_{W_{i}}S_{W}^{-1}e_{j}\rangle \upsilon_{i} \pi_{V_{i}}e_{j}\\
&=&\sum_{i\in I, j\in J}\omega_{i}\upsilon_{i}\pi_{V_{i}}\langle S_{W}^{-1}\pi_{W_{i}}f, e_{j}\rangle e_{j}\\
&=&\sum_{i\in I}\omega_{i}\upsilon_{i}\pi_{V_{i}}S_{W}^{-1}\pi_{W_{i}}f\\
&=& T_{V} \phi_{vw} T_{W}^{*}f =\psi_{vw}.
\end{eqnarray*}
for all $f\in\mathcal{H}$. This completes the proof.
\end{proof}

\begin{thm}\label{3.8}
Let $W = \lbrace (W_{i}, \omega_{i})\rbrace_{i\in I} $ be a fusion frame  and $V = \lbrace (V_{i}, \upsilon_{i})\rbrace_{i\in I}$ be a Bessel fusion sequence, also $\lbrace g_{i,j}\rbrace_{j\in J_{i}}$ be a frame for $V_{i}$ with bounds $A_{i}$ and $B_{i}$ for every $i\in I$ such that $0 < a = \inf_{i\in I} A_{i} \leq \sup_{i\in I}B_{i} = b< \infty$. Then $V$ is an approximate alternate dual fusion frame of $W$ if and only if $G = \lbrace \upsilon_{i}g_{i,j}\rbrace_{i\in I, j\in J_{i}}$ is an approximate dual of $F = \lbrace \omega_{i}\pi_{W_{i}}S_{W}^{-1}\widetilde{g}_{i,j}\rbrace_{i\in I, j\in J_{i}}$ where $\lbrace \widetilde{g}_{i,j}\rbrace_{j\in J_{i}}$ is the canonical dual of $\lbrace g_{i,j}\rbrace_{j\in J_{i}}$.
\end{thm}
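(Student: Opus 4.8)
The plan is to mirror the strategy of Proposition~\ref{app.wrt.local}: I will show that the Hilbert-space operator $T_G T_F^{*}$ attached to the discrete pair $(F,G)$ is \emph{literally} the operator $\psi_{vw}=T_V\phi_{vw}T_W^{*}$ attached to the fusion pair $(W,V)$. Once the identity $T_G T_F^{*}=\psi_{vw}$ is in hand, the definition of approximate dual frames ($\Vert I_{\mathcal H}-T_G T_F^{*}\Vert<1$) and that of approximate alternate dual fusion frames ($\Vert I_{\mathcal H}-\psi_{vw}\Vert<1$) yield the stated equivalence immediately.

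First I would check that $F$ and $G$ are genuine Bessel sequences, so that "approximate dual" is meaningful. For $G$, since $\{g_{i,j}\}_{j}$ has upper bound $B_i\le b$ and $\langle f,g_{i,j}\rangle=\langle\pi_{V_i}f,g_{i,j}\rangle$, one gets $\sum_{i,j}\vert\langle f,\upsilon_i g_{i,j}\rangle\vert^{2}\le b\sum_i\upsilon_i^{2}\Vert\pi_{V_i}f\Vert^{2}\le bD\Vert f\Vert^{2}$, where $D$ is the Bessel bound of $V$. For $F$, the canonical dual $\{\widetilde g_{i,j}\}_{j}$ is a frame for $V_i$ with upper bound $1/A_i\le 1/a$; using $\langle h,\widetilde g_{i,j}\rangle=\langle\pi_{V_i}h,\widetilde g_{i,j}\rangle$ together with $a=\inf_i A_i>0$, a short estimate analogous to the one in Proposition~\ref{app.wrt.local} gives $\sum_{i,j}\vert\langle f,\omega_i\pi_{W_i}S_W^{-1}\widetilde g_{i,j}\rangle\vert^{2}\le a^{-1}\Vert S_W^{-1}\Vert^{2}B\Vert f\Vert^{2}$, so $F$ is Bessel as well.

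The heart of the matter is the computation of $T_G T_F^{*}$. Using self-adjointness of $\pi_{W_i}$ and $S_W^{-1}$, I would rewrite $\langle f,\omega_i\pi_{W_i}S_W^{-1}\widetilde g_{i,j}\rangle=\omega_i\langle S_W^{-1}\pi_{W_i}f,\widetilde g_{i,j}\rangle$, so that $T_G T_F^{*}f=\sum_i\omega_i\upsilon_i\sum_j\langle S_W^{-1}\pi_{W_i}f,\widetilde g_{i,j}\rangle g_{i,j}$. The crucial point---and the only step I expect to present any obstacle---is that $S_W^{-1}\pi_{W_i}f$ need \emph{not} lie in $V_i$, so the reconstruction identity for $\{g_{i,j}\}_{j}$ cannot be applied to it directly. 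The remedy is that $\widetilde g_{i,j}\in V_i$, whence $\langle S_W^{-1}\pi_{W_i}f,\widetilde g_{i,j}\rangle=\langle\pi_{V_i}S_W^{-1}\pi_{W_i}f,\widetilde g_{i,j}\rangle$; the argument $\pi_{V_i}S_W^{-1}\pi_{W_i}f$ now does lie in $V_i$, so the inner sum collapses by reconstruction to $\pi_{V_i}S_W^{-1}\pi_{W_i}f$. This produces $T_G T_F^{*}f=\sum_i\omega_i\upsilon_i\pi_{V_i}S_W^{-1}\pi_{W_i}f=\psi_{vw}f$ for every $f\in\mathcal H$.

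Finally I would conclude: the identity $T_G T_F^{*}=\psi_{vw}$ forces $\Vert I_{\mathcal H}-T_G T_F^{*}\Vert=\Vert I_{\mathcal H}-\psi_{vw}\Vert$, so $G$ is an approximate dual of $F$ precisely when $V$ is an approximate alternate dual fusion frame of $W$, which is the asserted equivalence.
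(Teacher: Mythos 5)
Your proposal is correct and follows essentially the same route as the paper: verify that $F$ and $G$ are Bessel (the paper cites Theorem~\ref{fusion 2} for $G$ where you estimate directly, a negligible difference) and then establish the operator identity $T_GT_F^{*}=\psi_{vw}$ via the local reconstruction formula $\sum_j\langle h,\widetilde g_{i,j}\rangle g_{i,j}=h$ for $h\in V_i$. Your explicit insertion of $\pi_{V_i}$ before applying reconstruction is exactly the step the paper performs implicitly, so nothing is missing.
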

\begin{proof} We first show that $F$  is a  Bessel sequence for $\mathcal{H}$.
Indeed, for each $f\in \mathcal{H}$ \begin{eqnarray*} \sum_{i\in I, j\in
J_{i}} \vert \langle f,
\omega_{i}\pi_{W_{i}}S_{W}^{-1}\widetilde{g}_{i,j}\rangle\vert^{2} &=&
\sum_{i\in I}\omega_{i}^{2}\sum_{j\in J_{i}} \vert \langle
\pi_{V_{i}}S_{W}^{-1}\pi_{W_{i}}f, \widetilde{g}_{i,j}\rangle\vert^{2}\\
&\leq& \sum_{i\in I} \dfrac{\omega_{i}^{2}}{A_{i}} \Vert
\pi_{V_{i}}S_{W}^{-1}\pi_{W_{i}}f\Vert^{2}\\
&\leq& \dfrac{\Vert S_{W}^{-1}\Vert^{2}}{a} \sum_{i\in I} \omega_{i}^{2}
\Vert \pi_{W_{i}}f\Vert^{2}. \end{eqnarray*}
Moreover, by Theorem \ref{fusion 2}, $G$ is a Bessel sequence for $\mathcal{H}$.
 On the other hand,
\begin{eqnarray*} T_{V}\phi_{vw}T^{*}_{W}f &=& \sum_{i\in I}
\omega_{i}\upsilon_{i} \pi_{V_{i}}S_{W}^{-1}\pi_{W_{i}}f\\ &=& \sum_{i\in I}
\omega_{i}\upsilon_{i} \sum_{j\in J_{i}} \langle
\pi_{V_{i}}S_{W}^{-1}\pi_{W_{i}}f,
\widetilde{g}_{i,j}\rangle g_{i,j}\\
&=& \sum_{i\in I, j\in J_{i}} \langle f,
\omega_{i}\pi_{W_{i}}S_{W}^{-1}\widetilde{g}_{i,j}\rangle \upsilon_{i}g_{i,j}
= T_{G}T^{*}_{F}f. \end{eqnarray*}
This completes the proof.\end{proof}

The following theorem, gives the idea
that will lead to one of the main results of this section.

\begin{thm}\label{Riesz app} Let $W=\lbrace (W_{i}, \omega_{i})\rbrace_{i\in I}$ be a Riesz
fusion basis. For an approximate alternate dual fusion frame $\lbrace (V_{i}, \omega_{i})\rbrace_{i\in I}$ of $W$, the sequence $\lbrace (\psi_{vw}^{-1} V_{i},
\omega_{i})\rbrace_{i\in I}$ is an alternate dual fusion frame of $W$.
\end{thm}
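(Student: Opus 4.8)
The plan is to exploit the rigidity of a Riesz fusion basis, which forces the canonical coefficient operators $R_i:=\omega_i^{2}S_W^{-1}\pi_{W_i}$ to behave as a system of oblique projections adapted to the decomposition $\{S_W^{-1}W_i\}_{i\in I}$. The whole proof then reduces to one geometric inclusion, after which the reconstruction identity is immediate and only the Bessel property requires a (routine) estimate.

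First I would record the two facts that make everything run. From the canonical reconstruction formula $\sum_{i\in I}R_i=I_{\mathcal H}$, and from $\psi_{vw}=T_V\phi_{vw}T_W^{*}$ one computes $\psi_{vw}=\sum_{i\in I}\pi_{V_i}R_i$. Since $W$ is a Riesz fusion basis it is a Riesz decomposition, and as $S_W^{-1}$ is bounded and invertible the family $\{S_W^{-1}W_i\}_{i\in I}$ is again a Riesz decomposition; in particular every $f$ has a \emph{unique} expansion $f=\sum_i g_i$ with $g_i\in S_W^{-1}W_i$. Because $R_if\in S_W^{-1}W_i$ and $\sum_iR_if=f$, uniqueness identifies $R_i$ as the $i$-th coordinate projection of this decomposition, whence $R_i$ is idempotent and $R_iR_j=\delta_{ij}R_i$, with $\mathrm{range}\,R_i=S_W^{-1}W_i$.

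The heart of the argument is a one-line observation. Since $\|I_{\mathcal H}-\psi_{vw}\|<1$, the operator $\psi_{vw}$ is invertible; and for $x\in S_W^{-1}W_i$ we have $R_kx=\delta_{ki}x$, so
\[
\psi_{vw}x=\sum_{k\in I}\pi_{V_k}R_kx=\pi_{V_i}x\in V_i .
\]
Hence $x\in\psi_{vw}^{-1}V_i$, i.e. $S_W^{-1}W_i\subseteq\psi_{vw}^{-1}V_i=:\widetilde V_i$. This inclusion means $\pi_{\widetilde V_i}$ fixes $S_W^{-1}W_i$ pointwise, so $\pi_{\widetilde V_i}R_if=R_if$ for every $f$. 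Summing over $i$ and using $\sum_iR_i=I_{\mathcal H}$ gives
\[
\sum_{i\in I}\omega_i^{2}\pi_{\widetilde V_i}S_W^{-1}\pi_{W_i}f=\sum_{i\in I}\pi_{\widetilde V_i}R_if=\sum_{i\in I}R_if=f,
\]
which is exactly the defining identity (\ref{Def:alt}) for $\{(\widetilde V_i,\omega_i)\}$ to be an alternate dual of $W$.

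It remains to check that $\{(\widetilde V_i,\omega_i)\}$ is a Bessel fusion sequence, so that it genuinely qualifies as an alternate dual. I would do this by a direct estimate: parametrising $\widetilde V_i=\psi_{vw}^{-1}V_i$ and using $\|\psi_{vw}^{-1}u\|\ge\|\psi_{vw}\|^{-1}\|u\|$ one obtains $\|\pi_{\widetilde V_i}f\|^{2}\le\|\psi_{vw}\|^{2}\,\|\pi_{V_i}(\psi_{vw}^{-1})^{*}f\|^{2}$, and summing against the Bessel bound $D$ of $V$ yields $\sum_i\omega_i^{2}\|\pi_{\widetilde V_i}f\|^{2}\le\|\psi_{vw}\|^{2}\|\psi_{vw}^{-1}\|^{2}D\,\|f\|^{2}$. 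The step I expect to be the real obstacle is the identification in the third paragraph: it is tempting but false to replace $\pi_{\widetilde V_i}$ by $\psi_{vw}^{-1}\pi_{V_i}$, since $\psi_{vw}^{-1}$ is not unitary and $\pi_{\psi_{vw}^{-1}V_i}\ne\psi_{vw}^{-1}\pi_{V_i}$ in general. What rescues the proof is precisely the Riesz hypothesis, which makes $R_i$ idempotent (this fails for a general fusion frame) and thereby delivers the inclusion $S_W^{-1}W_i\subseteq\widetilde V_i$; the orthogonal projection $\pi_{\widetilde V_i}$ then acts as the identity on the only vectors that matter, namely those in $\mathrm{range}\,R_i$, so no delicate comparison of oblique and orthogonal projections is ever needed.
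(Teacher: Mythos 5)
Your proof is correct, and it arrives at the same pivotal inclusion as the paper, namely $S_W^{-1}W_i\subseteq\psi_{vw}^{-1}V_i$ (this is (\ref{Riesz..})), but by a genuinely different route. The paper localizes: it fixes an orthonormal basis $\{e_{i,j}\}_{j\in J_i}$ of each $W_i$, identifies $\psi_{vw}$ with $T_GT_F^{*}$ for the discrete systems $F=\{\omega_ie_{i,j}\}$ and $G=\{\omega_i\pi_{V_i}S_W^{-1}e_{i,j}\}$, invokes the Christensen--Laugesen result that $\{(T_GT_F^{*})^{-1}\omega_i\pi_{V_i}S_W^{-1}e_{i,j}\}$ is a dual of $F$, and then uses the uniqueness of duals of the discrete Riesz basis $F$ together with the identity $S_F=S_W$ to conclude $\psi_{vw}^{-1}\pi_{V_i}S_W^{-1}e_{i,j}=S_W^{-1}e_{i,j}$ --- which, after applying $\psi_{vw}$ to both sides, is precisely your identity $\psi_{vw}x=\pi_{V_i}x$ for $x\in S_W^{-1}W_i$. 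You obtain that identity directly from the Riesz decomposition structure, by observing that the operators $R_i=\omega_i^{2}S_W^{-1}\pi_{W_i}$ are the oblique coordinate projections of the decomposition $\{S_W^{-1}W_i\}_{i\in I}$ and hence satisfy $R_iR_j=\delta_{ij}R_i$. Your version is self-contained --- it needs neither the discrete approximate-dual theorem nor the uniqueness of duals of discrete Riesz bases --- and it supplies the Bessel bound for $\{(\psi_{vw}^{-1}V_i,\omega_i)\}_{i\in I}$, a point the paper's closing ``it immediately follows'' leaves implicit; the paper's detour through local frames buys consistency with the machinery used elsewhere in the section (Proposition \ref{app.wrt.local}, Theorem \ref{3.8}) but nothing your direct argument misses. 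You are also right that the one genuine trap here --- replacing $\pi_{\psi_{vw}^{-1}V_i}$ by $\psi_{vw}^{-1}\pi_{V_i}$ --- must be avoided, and your use of the inclusion to make $\pi_{\psi_{vw}^{-1}V_i}$ act as the identity on the range of $R_i$ sidesteps it in exactly the way the paper's proof implicitly does.
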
 \begin{proof}
Suppose that $\lbrace e_{i,j}\rbrace_{j\in J_{i}}$ is
an orthonormal basis of $W_{i}$, for each $i\in I$. Then $F := \lbrace
\omega_{i}e_{i,j}\rbrace_{i\in I,j\in J_{i}}$ is a frame for $\mathcal{H}$ by
Theorem \ref{fusion 2}. Now, for each $f\in
\mathcal{H}$ we obtain \begin{eqnarray*} \sum_{i\in I, j\in J_{i}} \vert
\langle f,\omega_{i}\pi_{V_{i}}S_{W}^{-1}e_{i,j}\rangle\vert^{2} &=&
\sum_{i\in I}\sum_{j\in J_i} \vert \langle \omega_{i}S_{W}^{-1}\pi_{V_i}f,
e_{i,j}\rangle\vert^{2}\\ &\leq& \sum_{i\in I} \omega_{i}^{2}\|S_{W}^{-1}\pi_{V_i}f\|^2 \\
&\leq& \|S_{W}^{-1}\|^2 \sum_{i\in I} \omega_{i}^{2}\|\pi_{V_i}f\|^2.
\end{eqnarray*} Thus, $G := \lbrace \omega_{i}\pi_{V_{i}} S_{W}^{-1}
e_{i,j}\rbrace_{i\in I,j\in J_{i}}$ is a Bessel sequence for
$\mathcal{H}$. Moreover,

\begin{eqnarray*}
\psi_{vw}f = \sum_{i\in I} \omega_{i}^{2}\pi_{V_{i}} S_{W}^{-1}
\pi_{W_{i}}f &=& \sum_{i\in I,j\in J_{i}} \omega_{i}^{2}\pi_{V_{i}}
S_{W}^{-1} \langle f, e_{i,j}\rangle e_{i,j}\\ &=& \sum_{i\in I,j\in
J_{i}} \langle f, \omega_{i}e_{i,j}\rangle \omega_{i}\pi_{V_{i}}
S_{W}^{-1} e_{i,j}=T_GT_F^*f. \end{eqnarray*} Hence,  by assumption $G$ is an approximate  dual of $F$. This
implies that the sequence $\lbrace (T_{G}T_{F}^{*})^{-1} \omega_{i}\pi_{V_{i}} S_{W}^{-1}
e_{i,j}\rbrace_{i\in I,j\in J_{i}}$ is a dual for $\lbrace
\omega_{i}e_{i,j}\rbrace_{i\in I,j\in J_{i}}$. On the other hand the sequence
$\lbrace \omega_{i}e_{i,j}\rbrace_{i\in I, j\in J_{i}}$ is a Riesz basis for
$\mathcal{H}$  by Theorem 3.6 of \cite{Asgari}. Using the fact that discrete
Riesz bases have only one dual we obtain
\begin{eqnarray}\label{psi.TGTF*}
  (T_{G}T_{F}^{*})^{-1} \omega_{i}\pi_{V_{i}} S_{W}^{-1} e_{i,j} = S_{F}^{-1} \omega_{i}e_{i,j} \quad (i\in I, j\in J_{i}).
\end{eqnarray} Furthermore, it is not difficult to see that $S_{F} = S_{W}$. Indeed, for all $f\in \mathcal{H}$ we have
\begin{eqnarray*}
 S_{F}f &=& \sum_{i\in I, j\in J_{i}} \langle f, \omega_{i}e_{i,j}\rangle \omega_{i}e_{i,j}\\
&=& \sum_{i\in I} \omega_{i}^{2}\sum_{j\in J_{i}}\langle \pi_{W_{i}}f, e_{i,j}\rangle e_{i,j}\\
&=& \sum_{i\in I} \omega_{i}^{2}\pi_{W_{i}}f = S_{W}f.
\end{eqnarray*}
Now, since $T_{G}T^{*}_{F}=\psi_{vw}$. By
Substituting $\psi_{vw}$ and $S_{W}$ in (\ref{psi.TGTF*}), we finally
conclude that \begin{eqnarray*} \psi_{vw}^{-1} \pi_{V_{i}} S_{W}^{-1} e_{i,j}
= S_{W}^{-1}e_{i,j},\quad (i\in I, j\in J_{i}). \end{eqnarray*} In
particular, \begin{eqnarray}\label{Riesz..} \psi_{vw}^{-1} V_{i} \supseteq
S_{W}^{-1} W_{i}, \quad (i\in I). \end{eqnarray} It immediately follows that
$\lbrace (\psi_{vw}^{-1} V_{i}, \omega_{i})\rbrace_{i\in I}$ is an alternate dual fusion frame of $\lbrace (
W_{i}, \omega_{i})\rbrace_{i\in I}$. \end{proof} By the above theorem we obtain the following
 characterization of alternate duals of  Riesz fusion bases.
\begin{cor}\label{al.Riesz} Let $W=\lbrace (W_{i},\omega_{i})\rbrace_{i\in I}$ be a  Riesz
fusion basis. A Bessel sequence $V = \lbrace (V_{i},\omega_{i})\rbrace_{i\in I}$
is an  alternate dual fusion frame of $W$ if and only if \begin{eqnarray}\label{Riesz dual}
 V_{i} \supseteq S_{W}^{-1} W_{i}, \quad (i\in I).
\end{eqnarray} \end{cor}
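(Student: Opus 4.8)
The plan is to prove both implications by reducing to results already in hand: the necessity to Theorem \ref{Riesz app}, and the sufficiency to the decomposition result of Leng recalled in the introduction. So I would not attempt a fresh argument for either direction, but rather package the hypotheses so that these two earlier statements apply.

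For the necessity, suppose $V = \lbrace (V_{i},\omega_{i})\rbrace_{i\in I}$ is an alternate dual fusion frame of $W$. The first observation is that an exact alternate dual is automatically an \emph{approximate} alternate dual: by definition $\psi_{vw} = T_{V}\phi_{vw}T_{W}^{*} = I_{\mathcal{H}}$, so $\Vert I_{\mathcal{H}} - \psi_{vw}\Vert = 0 < 1$. Hence Theorem \ref{Riesz app} applies and yields both that $\lbrace (\psi_{vw}^{-1}V_{i},\omega_{i})\rbrace_{i\in I}$ is an alternate dual of $W$ and the containment (\ref{Riesz..}), namely $\psi_{vw}^{-1} V_{i} \supseteq S_{W}^{-1} W_{i}$ for every $i\in I$. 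Since here $\psi_{vw} = I_{\mathcal{H}}$, we have $\psi_{vw}^{-1} = I_{\mathcal{H}}$, and (\ref{Riesz..}) collapses to $V_{i} \supseteq S_{W}^{-1} W_{i}$, which is exactly (\ref{Riesz dual}).

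For the sufficiency, assume $V_{i} \supseteq S_{W}^{-1} W_{i}$ for all $i\in I$. Because $S_{W}^{-1}$ is bounded and invertible and each $W_{i}$ is closed, $S_{W}^{-1} W_{i}$ is a closed subspace of $\mathcal{H}$ contained in $V_{i}$; setting $U_{i} := V_{i} \ominus S_{W}^{-1} W_{i}$, the orthogonal complement of $S_{W}^{-1}W_{i}$ inside $V_{i}$, produces the orthogonal decomposition $V_{i} = S_{W}^{-1} W_{i} \oplus U_{i}$. The Bessel fusion sequence $V$ is then precisely of the form treated by Leng's result quoted in the introduction, so $V$ is an alternate dual fusion frame of $W$.

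I expect the only delicate point to lie in the necessity direction: one must recognize that the exact duality hypothesis is nothing but the special case $\psi_{vw} = I_{\mathcal{H}}$ of the approximate theory, so that the general containment (\ref{Riesz..}) — obtained in Theorem \ref{Riesz app} through the uniqueness of the dual of a discrete Riesz basis — can be invoked with the factor $\psi_{vw}^{-1}$ reducing to the identity and thus not obstructing the conclusion. The sufficiency direction, by contrast, is essentially immediate once one notes that the containment permits an orthogonal splitting of $V_{i}$ and then cites the established construction.
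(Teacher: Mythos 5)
Your proof is correct and follows essentially the same route as the paper: the necessity is obtained by viewing an exact dual as an approximate dual with $\psi_{vw}=I_{\mathcal{H}}$ and invoking the containment (\ref{Riesz..}) from the proof of Theorem \ref{Riesz app}, while the sufficiency (which the paper dismisses as ``clear'') follows from the observation that $V_{i}\supseteq S_{W}^{-1}W_{i}$ forces $\pi_{V_{i}}S_{W}^{-1}\pi_{W_{i}}=S_{W}^{-1}\pi_{W_{i}}$, equivalently from the quoted decomposition $V_{i}=S_{W}^{-1}W_{i}\oplus U_{i}$. Your version merely makes the sufficiency step explicit, which is a harmless elaboration.
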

\begin{proof}
If $V$ satisfies in (\ref{Riesz dual}), clearly $V$ is an alternate dual of $W$. On the other hand, since every dual fusion frame is an approximate dual with $\psi_{vw} = I_{\mathcal{H}}$, so  by (\ref{Riesz..}) the result follows.
\end{proof}
 Corollary \ref{al.Riesz}, also  shows that, unlike discrete frames, Riesz fusion bases may have more than one dual. Moreover, in the next proposition, we show that every fusion frame has at least an alternate dual.
\begin{prop}
Every fusion frame has an alternate dual fusion frame different from the canonical dual fusion frame.
\end{prop}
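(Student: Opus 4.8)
The plan is to build the required dual by enlarging one factor of the canonical dual, using the construction from \cite{Leng} recalled in the preliminaries: for any family of closed subspaces $\{U_i\}_{i\in I}$ the sequence $\{(S_W^{-1}W_i\oplus U_i,\omega_i)\}_{i\in I}$ is an alternate dual of $W=\{(W_i,\omega_i)\}_{i\in I}$ whenever it is Bessel. Since the canonical dual is exactly $\{(S_W^{-1}W_i,\omega_i)\}_{i\in I}$, it suffices to make a single factor genuinely larger. First I would locate an index $i_0$ with $W_{i_0}\neq\mathcal H$; because $S_W^{-1}$ is a bounded invertible operator this is equivalent to $S_W^{-1}W_{i_0}\neq\mathcal H$, so I may pick a nonzero closed subspace $U_{i_0}\subseteq(S_W^{-1}W_{i_0})^\perp$.

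Setting $V_{i_0}=S_W^{-1}W_{i_0}\oplus U_{i_0}$ and $V_i=S_W^{-1}W_i$ for $i\neq i_0$, the sequence $V=\{(V_i,\omega_i)\}_{i\in I}$ is Bessel: only one subspace was enlarged, so $\sum_{i}\omega_i^2\|\pi_{V_i}f\|^2$ exceeds the corresponding sum for the canonical dual by at most $\omega_{i_0}^2\|f\|^2$, and the canonical dual is already a fusion frame. By the cited construction $V$ is then an alternate dual of $W$, and since $V_{i_0}$ properly contains $S_W^{-1}W_{i_0}$ it differs from the canonical dual, which settles the statement in this case.

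The only place this argument can stall --- and the step I expect to be the main obstacle --- is the degenerate situation in which $W_i=\mathcal H$ for every $i\in I$, so that $S_W^{-1}W_i=\mathcal H$ leaves no room for a nontrivial orthogonal enlargement. There $S_W=\bigl(\sum_i\omega_i^2\bigr)I$, and keeping $V_i=\mathcal H$ the alternate-dual equation collapses to the single scalar condition $\sum_i\omega_i\nu_i=\sum_i\omega_i^2$ on the new weights $\nu_i$. I would then perturb the weights instead of the subspaces: if $|I|\geq 2$, replace $(\omega_1,\omega_2)$ by $(\omega_1+t,\ \omega_2-t\omega_1/\omega_2)$ for small $t>0$, which keeps every weight positive and the Bessel property intact while preserving $\sum_i\omega_i\nu_i$, and thus yields an alternate dual whose weight sequence differs from $\{\omega_i\}_{i\in I}$.

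The single case left uncovered is $I$ a singleton with $W_1=\mathcal H$, i.e. the whole space regarded as one subspace; a direct computation shows its canonical dual is the unique alternate dual, so the statement should be read modulo this trivial fusion frame. Combining the two constructions --- orthogonal enlargement when some $W_{i_0}\neq\mathcal H$, and weight perturbation when all $W_i=\mathcal H$ with $|I|\geq2$ --- produces in every non-degenerate case an alternate dual fusion frame distinct from the canonical one, as asserted.
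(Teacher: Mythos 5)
Your argument is correct, and in the first (main) case it coincides with the paper's own proof: when some $W_{i_0}\neq\mathcal H$, both you and the authors enlarge the single canonical-dual subspace $S_W^{-1}W_{i_0}$ by a nonzero closed subspace of its orthogonal complement and invoke the $V_i\supseteq S_W^{-1}W_i$ mechanism (the construction of \cite{Leng} recalled in the preliminaries). The difference is in the degenerate case $W_i=\mathcal H$ for all $i$. There the paper changes the \emph{subspaces}, setting $V_1=\mathcal H$, $V_i=\{0\}$ for $i>1$ and rescaling only $\nu_1=\bigl(\sum_i\omega_i^2\bigr)/\omega_1$, whereas you keep every $V_i=\mathcal H$ and perturb two \emph{weights} subject to the scalar constraint $\sum_i\omega_i\nu_i=\sum_i\omega_i^2$; both constructions are valid, yours requires $|I|\geq 2$ just as the paper's implicitly does. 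A point in your favour: you explicitly isolate the genuinely exceptional case $|I|=1$, $W_1=\mathcal H$, where the dual identity forces $V_1=\mathcal H$ and $\nu_1=\omega_1$, so the canonical dual really is the only alternate dual and the proposition fails as literally stated. The paper's proof silently breaks down there as well (its formula returns $\nu_1=\omega_1$, i.e.\ the canonical dual itself), so your caveat is not a gap in your argument but a correction the paper should have made.
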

\begin{proof}
Let  $\{(W_{i},\omega_{i})\}_{i\in I}$ be a fusion frame with frame operator $S_{W}$. First suppose that there exists $i_{0}\in I$ such that $W_{i_{0}} \neq \mathcal{H}$. Take $V_{i}=S_{W}^{-1}W_{i}$ for $i\neq i_{0}$ and $V_{i_{0}}=S_{W}^{-1}W_{i_{0}} \oplus U_{i_{0}}$ where $U_{i_{0}}\subseteq (S_{W}^{-1}W_{i_{0}})^{\bot}$ is an arbitrary closed subspace.
Obviously $\{(V_{i},\omega_{i})\}_{i\in I}$ is an alternate dual fusion frame of $\{(W_{i},\omega_{i})\}_{i\in I}$. Indeed
\begin{eqnarray*}
T_{V} \phi_{vw} T^{*}_{W}f &=& \sum_{i\in I, i\neq i_{0}}\omega_{i}^{2}\pi_{V_{i}}S_{W}^{-1}\pi_{W_{i}}f + \omega_{i_{0}}^{2}\pi_{V_{i_{0}}}S_{W}^{-1}\pi_{W_{i_{0}}}f\\
&=& \sum_{i\in I, i\neq i_{0}}\omega_{i}^{2}\pi_{S_{W}^{-1}W_{i}}S_{W}^{-1}\pi_{W_{i}}f + \omega_{i_{0}}^{2}\pi_{S_{W}^{-1}W_{i_{0}} \oplus U_{i_{0}}}S_{W}^{-1}\pi_{W_{i_{0}}}f\\
&=& \sum_{i\in I} \omega_{i}^{2} \pi_{S_{W}^{-1}W_{i}}S_{W}^{-1}\pi_{W_{i}}f = f,
\end{eqnarray*}
for every $f\in \mathcal{H}$. On the other hand, assume that  $W_{i}=\mathcal{H}$ for all $i\in I$. It immediately follows that $\lbrace \omega_{i}\rbrace_{i\in I}\in l^{2}$. Take
$V_{1}=\mathcal{H}$ and $V_{i}=\{0\}$ for $i > 1$, and assume that $\nu_{1}=\frac{\sum_{i\in I}\omega_{i}^{2}}{\omega_{1}}$ and $ \nu_{i}=\omega_{i}$ for $ i > 1$. Then
 $S_{W}f=\left(\sum_{i\in I}\omega_{i}^{2}\right)f$ and for every $f\in \mathcal{H}$ we have
\begin{eqnarray*}
\sum_{i\in I}\omega_{i}\nu_{i}\pi_{V_{i}}S_{W}^{-1}\pi_{W_{i}}f&=&\sum_{i\in I}\omega_{i}\nu_{i}\pi_{V_{i}}S_{W}^{-1}f = \omega_{1}\nu_{1}\pi_{V_{1}}S_{W}^{-1}f\\
&=& \left(\sum_{i\in I}\omega_{i}^{2}\right) S_{W}^{-1}f = f.
\end{eqnarray*}
This shows that $\{(V_{i},\nu_{i})\}_{i\in I}$ is an alternate dual fusion frame of $\{(W_{i},\omega_{i})\}_{i\in I}$.
\end{proof}
\begin{ex}
Let $W = \lbrace W_{i}\rbrace_{i\in I}$ be a Riesz  fusion basis. Consider
\begin{eqnarray*}
V_{i} = (\overline{\textit{span}}_{j\neq i}\lbrace W_{j}\rbrace)^{\perp}, \quad (i\in I)
\end{eqnarray*}
we claim that $V = \lbrace (V_{i}, \upsilon_{i})\rbrace_{i\in I}$ is an alternate dual of $\lbrace W_{i}\rbrace_{i\in I}$, for all $\lbrace \upsilon_{i}\rbrace_{i\in I} \in l^{2}$. Take $f_{i}\in W_{i}$, since $\lbrace S_{W}^{-1/2}W_{i}\rbrace_{i\in I}$ is an orthogonal family of subspaces in $\mathcal{H}$ so $S_{W}^{-1}f_{i}\in V_{i}$. Hence $ V_{i} \supseteq S_{W}^{-1} W_{i}$, for every $i\in I$ and so $V$ is a dual of $W$ by Corollary \ref{al.Riesz}. In fact, this dual is the unique maximal biorthogonal sequence for $\lbrace W_{i}\rbrace_{i\in I}$, see also Proposition 4.3 in \cite{Cas04}.
\end{ex}
Suppose that $\lbrace W_{i}\rbrace_{i\in I}$ is a Riesz fusion basis, by Theorem 3.9 in \cite{Asgari}, there exists an orthonormal fusion basis $\lbrace U_{i}\rbrace_{i\in I}$ and a bounded bijective linear operator $T: \mathcal{H}\rightarrow \mathcal{H}$ for which $T U_{i} = W_{i}$ for all $i\in I$. Therefore the canonical dual of a Riesz fusion basis is also a Riesz fusion basis. The following theorem,   shows that other alternate duals of $\lbrace W_{i}\rbrace_{i\in I}$ are not Riesz fusion basis. This result is a generalization of Proposition 3.7 (2) in \cite{Hei15} for infinite case.

\begin{thm}
Let $W = \lbrace W_{i}\rbrace_{i\in I}$ be a Riesz fusion basis. The only dual $\lbrace V_{i}\rbrace_{i\in I}$ of $W$ that is Riesz  basis is the canonical dual.
\end{thm}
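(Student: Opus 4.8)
The plan is to reduce the statement to the uniqueness of representations in a Riesz decomposition, combined with the containment characterization of duals of Riesz fusion bases proved in Corollary \ref{al.Riesz}. Since everything here is $1$-uniform, all weights equal $1$ and the canonical dual of $W$ is precisely $\lbrace S_{W}^{-1}W_{i}\rbrace_{i\in I}$; the goal is to show that a dual $V=\lbrace V_{i}\rbrace_{i\in I}$ which is a Riesz fusion basis must satisfy $V_{i}=S_{W}^{-1}W_{i}$ for all $i$.

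First I would record the two facts I need. As $V$ is assumed to be an alternate dual of the Riesz fusion basis $W$, Corollary \ref{al.Riesz} gives the containment $V_{i}\supseteq S_{W}^{-1}W_{i}$ for every $i\in I$. Thus it suffices to prove the reverse containment $V_{i}\subseteq S_{W}^{-1}W_{i}$, which forces equality and identifies $V$ with the canonical dual. Second, I would invoke the Riesz structure on both sides: by hypothesis $V$ is a Riesz fusion basis, hence, by the equivalence recalled in Section~1, a Riesz decomposition of $\mathcal{H}$, so every $f\in\mathcal{H}$ has a \emph{unique} expansion $f=\sum_{i\in I}g_{i}$ with $g_{i}\in V_{i}$; and the canonical dual $\lbrace S_{W}^{-1}W_{i}\rbrace_{i\in I}$ is itself a Riesz fusion basis, as noted just before the theorem, hence also a Riesz decomposition.

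The crux is then a short uniqueness argument. Fix $i\in I$ and take any $v\in V_{i}$. Expanding $v$ in the canonical-dual decomposition produces $v=\sum_{j\in I}w_{j}$ with $w_{j}\in S_{W}^{-1}W_{j}$; because $S_{W}^{-1}W_{j}\subseteq V_{j}$, this same expansion is simultaneously a legitimate $V$-expansion of $v$. On the other hand, since $v\in V_{i}$, it also admits the trivial $V$-expansion placing $v$ in the $i$-th slot and $0$ elsewhere. Uniqueness of the $V$-decomposition now forces $w_{j}=0$ for $j\neq i$ and $w_{i}=v$, so that $v=w_{i}\in S_{W}^{-1}W_{i}$. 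This yields $V_{i}\subseteq S_{W}^{-1}W_{i}$, and with the reverse containment above we conclude $V_{i}=S_{W}^{-1}W_{i}$ for all $i$, i.e.\ $V$ is the canonical dual.

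The step requiring the most care is verifying that both $V$ \emph{and} the canonical dual genuinely provide Riesz decompositions, so that the two expansions of the single vector $v$ live in the same framework and uniqueness may legitimately be applied to them; this is exactly where the fact that the canonical dual of a Riesz fusion basis is again a Riesz fusion basis is essential. Once that point is secured the comparison of expansions is immediate, and no further estimates are needed.
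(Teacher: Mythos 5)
Your proposal is correct and follows essentially the same route as the paper: both rest on Corollary \ref{al.Riesz} for the containment $S_{W}^{-1}W_{i}\subseteq V_{i}$ and then compare two $V$-expansions of a single vector, using the uniqueness of the Riesz decomposition of $\lbrace V_{i}\rbrace_{i\in I}$ together with the fact that the canonical dual is again a Riesz fusion basis. The only difference is presentational: the paper argues by contradiction with a vector chosen in $V_{j}\cap(S_{W}^{-1}W_{j})^{\perp}$, while you argue directly that every $v\in V_{i}$ lands in $S_{W}^{-1}W_{i}$, which is a slightly cleaner phrasing of the same idea.
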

\begin{proof}
Suppose the Riesz basis $\lbrace V_{i}\rbrace_{i\in I}$ is an  alternate dual fusion frame of $W$. By Corollary \ref{al.Riesz}, $S_{W}^{-1}W_{i} \subseteq V_{i}$ for all $i\in I$. Assume that there exists $j\in I$ such that $S_{W}^{-1}W_{j} \subset V_{j}$, pick up a non-zero $0 \neq f \in V_{j}\cap (S_{W}^{-1}W_{j})^{\perp}$. Since $\lbrace S_{W}^{-1}W_{i}\rbrace_{i\in I}$ is a Riesz fusion basis we can choose a unique sequence $\lbrace g_{i}\rbrace_{i\in I}$ such that $f = \sum_{i\in I} g_{i}$ where $g_{i}\in S_{W}^{-1}W_{i}$ for all $i\in I$. Therefore, the vector $f$ has two representations of the elements in the Riesz fusion basis $\lbrace V_{i}\rbrace_{i\in I}$ which is a contradiction. Hence $V_{i} = S_{W}^{-1} W_{i}$ for every $i\in I$.
\end{proof}

Suppose $L\in B(\mathcal{H})$ is invertible and $\lbrace (V_{i}, \upsilon_{i})\rbrace_{i\in I}$ is an alternate dual (approximate alternate dual)  fusion frame of $W = \lbrace (W_{i}, \omega_{i})\rbrace_{i\in I}$. It is natural to ask whether $\lbrace (L V_{i}, \upsilon_{i})\rbrace_{i\in I}$ is an alternate dual (approximate alternate dual)  fusion frame  of $\lbrace (L W_{i}, \omega_{i})\rbrace_{i\in I}$.
\begin{thm}
Let $W = \lbrace (W_{i}, \omega_{i})\rbrace_{i\in I}$ be a fusion frame and $L\in B(\mathcal{H})$ be invertible such that $L^{*}L W_{i} \subseteq W_{i}$ for every $i\in I$. The following statements  hold:\\
\item[(i)]
If $V = \lbrace (V_{i}, \upsilon_{i})\rbrace_{i\in I}$ is an alternate dual fusion frame of $W$. Then the sequence $LV = \lbrace (L V_{i}, \upsilon_{i})\rbrace_{i\in I}$ is an alternate dual fusion frame of $LW = \lbrace (L W_{i}, \omega_{i})\rbrace_{i\in I}$.\\
\item[(ii)]
If $V$ is an approximate alternate dual fusion frame of $W$ such that,
\begin{eqnarray*}
\Vert I_{\mathcal{H}} - \psi_{vw}\Vert < \Vert L\Vert^{-1} \Vert L^{-1}\Vert^{-1}.
\end{eqnarray*}
Then $LV$ is an approximate alternate dual fusion frame of $LW$.
\end{thm}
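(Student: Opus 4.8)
The plan is to reduce both statements to the single operator identity $\psi_{LV,LW} = L\,\psi_{vw}\,L^{-1}$, where $\psi_{LV,LW} = T_{LV}\phi_{LV,LW}T_{LW}^{*}$ is the operator attached to the pair $(LV, LW)$. Granting this identity, part (i) is immediate: if $V$ is an alternate dual then $\psi_{vw} = I_{\mathcal{H}}$, whence $\psi_{LV,LW} = L\,I_{\mathcal{H}}\,L^{-1} = I_{\mathcal{H}}$, so $LV$ is an alternate dual of $LW$. Part (ii) follows from submultiplicativity of the norm, since $I_{\mathcal{H}} - \psi_{LV,LW} = L(I_{\mathcal{H}} - \psi_{vw})L^{-1}$ gives $\Vert I_{\mathcal{H}} - \psi_{LV,LW}\Vert \leq \Vert L\Vert\,\Vert L^{-1}\Vert\,\Vert I_{\mathcal{H}} - \psi_{vw}\Vert < 1$, the last inequality being exactly the hypothesis $\Vert I_{\mathcal{H}} - \psi_{vw}\Vert < \Vert L\Vert^{-1}\Vert L^{-1}\Vert^{-1}$. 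Thus everything hinges on the conjugation identity, and in particular on understanding how orthogonal projections transform under $L$.

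The technical heart is a lemma I would isolate and prove first: if $L^{*}L\,W_i \subseteq W_i$ then $\pi_{LW_i} = L\,\pi_{W_i}\,L^{-1}$. Here $L^{*}L$ is positive, self-adjoint and invertible, so the invariance $L^{*}L\,W_i\subseteq W_i$ forces $L^{*}L\,W_i = W_i$ and, since the orthogonal complement of an invariant subspace of a self-adjoint operator is again invariant, $L^{*}L\,W_i^{\perp} = W_i^{\perp}$. Writing $P = L\pi_{W_i}L^{-1}$ one checks $P^{2} = P$, $\mathrm{ran}\,P = LW_i$ and $\ker P = L\,W_i^{\perp}$; the identity $L\,W_i^{\perp} = (L^{*})^{-1}W_i^{\perp} = (LW_i)^{\perp}$ (a consequence of $L^{*}L\,W_i^{\perp} = W_i^{\perp}$) shows $P$ is self-adjoint, hence $P = \pi_{LW_i}$. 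The same invariance shows $L^{*}L$ commutes with every $\pi_{W_i}$, hence with $S_W = \sum_i \omega_i^{2}\pi_{W_i}$; consequently $S_{LW} = \sum_i\omega_i^{2}\pi_{LW_i} = L\,S_W\,L^{-1}$ is positive and invertible, with $S_{LW}^{-1} = L\,S_W^{-1}\,L^{-1}$ (this also records that $LW$ is a genuine fusion frame).

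With these substitutions in hand I would assemble the main identity. Inserting $\pi_{LW_i} = L\pi_{W_i}L^{-1}$ and $S_{LW}^{-1} = L S_W^{-1}L^{-1}$ into $\psi_{LV,LW} = \sum_i \omega_i\upsilon_i\,\pi_{LV_i}S_{LW}^{-1}\pi_{LW_i}$ collapses the middle factors and yields
\begin{eqnarray*}
\psi_{LV,LW} = \sum_{i\in I}\omega_i\upsilon_i\,\pi_{LV_i}\,L\,S_W^{-1}\pi_{W_i}\,L^{-1}.
\end{eqnarray*}
Comparing with $L\psi_{vw}L^{-1} = \sum_i \omega_i\upsilon_i\,L\,\pi_{V_i}S_W^{-1}\pi_{W_i}\,L^{-1}$, the desired identity reduces to the termwise relation $\pi_{LV_i}L = L\pi_{V_i}$ on the range of $S_W^{-1}\pi_{W_i}$.

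This termwise relation is precisely the step I expect to be the main obstacle. Since $\pi_{LV_i}L\pi_{V_i} = L\pi_{V_i}$ holds automatically (because $L\pi_{V_i}x\in LV_i$), the obstruction is the error term $\pi_{LV_i}L(I_{\mathcal{H}} - \pi_{V_i})$, which vanishes exactly when $L(I_{\mathcal{H}} - \pi_{V_i})x \perp LV_i$, i.e.\ when $L^{*}L\,V_i^{\perp}\subseteq V_i^{\perp}$. In other words, to push the conjugation through I need the same invariance for the dual subspaces, $L^{*}L\,V_i \subseteq V_i$, which the projection lemma then upgrades to $\pi_{LV_i} = L\pi_{V_i}L^{-1}$ and closes the proof. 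I would therefore either invoke this invariance for $\{V_i\}$ as the natural clean hypothesis, or, in the special situations where $S_W^{-1}W_i\subseteq V_i$ (for instance the canonical and Leng-type duals), observe that $(I_{\mathcal{H}} - \pi_{V_i})S_W^{-1}\pi_{W_i} = 0$ so the error term disappears for free. Handling the genuinely general dual, where neither condition is granted, is the delicate point the argument must confront.
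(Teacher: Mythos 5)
Your strategy coincides with the paper's: both arguments reduce the whole theorem to the conjugation identity $\psi_{Lv,Lw}=L\,\psi_{vw}\,L^{-1}$, after which (i) is the special case $\psi_{vw}=I_{\mathcal{H}}$ and (ii) is submultiplicativity of the operator norm. The paper obtains the identity by quoting $\pi_{LW_{i}}=L\pi_{W_{i}}L^{-1}$ and $S_{LW}=LS_{W}L^{-1}$ from \cite{Cas08} and then writing $\pi_{LV_{i}}S_{LW}^{-1}\pi_{LW_{i}}f=L\pi_{V_{i}}S_{W}^{-1}\pi_{W_{i}}L^{-1}f$ in a single line; you prove the projection lemma from first principles, which is more self-contained but not a different route.

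The substantive issue is the one you flag at the end, and it is real. The hypothesis $L^{*}LW_{i}\subseteq W_{i}$ licenses $\pi_{LW_{i}}=L\pi_{W_{i}}L^{-1}$ and hence $S_{LW}^{-1}\pi_{LW_{i}}=LS_{W}^{-1}\pi_{W_{i}}L^{-1}$, but the remaining factor requires $\pi_{LV_{i}}L=L\pi_{V_{i}}$, at least on the range of $S_{W}^{-1}\pi_{W_{i}}$, and as you compute the obstruction $\pi_{LV_{i}}L(I_{\mathcal{H}}-\pi_{V_{i}})$ vanishes only under an invariance condition on the dual subspaces, e.g.\ $L^{*}LV_{i}\subseteq V_{i}$, or in the degenerate situation $S_{W}^{-1}W_{i}\subseteq V_{i}$. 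Neither condition appears among the stated hypotheses, yet the paper's displayed computation performs exactly this substitution for the $V_{i}$ without comment; the obstacle you isolate is therefore present, unacknowledged, in the published proof. Your proposal is honest about this but, as written, does not close it either: you prove the theorem only under the additional assumption $L^{*}LV_{i}\subseteq V_{i}$ (or for duals with $S_{W}^{-1}W_{i}\subseteq V_{i}$, such as the canonical and Leng-type duals). To match the statement as printed you would have to either add such a hypothesis or exhibit a counterexample to the general claim; as it stands your write-up is a correct proof of a correctly weakened statement together with a correct diagnosis of the gap in the general one.
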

\begin{proof} The sequence $\lbrace (L W_{i}, \omega_{i})\rbrace_{i\in I}$ is a fusion frame with the frame operator $LS_{W}L^{-1}$ and $\pi_{LW_{i}} = L \pi_{W_{i}} L^{-1}$, see \cite{Cas08}. Therefore, for each $f\in\mathcal{H}$ we obtain
\begin{eqnarray*}
\sum_{i\in I} \omega_{i}\upsilon_{i}\pi_{LV_{i}} S_{LW} ^{-1}\pi_{LW_{i}}f &=& \sum_{i\in I}\omega_{i}\upsilon_{i} L \pi_{V_{i}} S_{W}^{-1}\pi_{W_{i}} L^{-1}f\\
&=& L L^{-1}f = f.
\end{eqnarray*}
This proves (i). To show (ii) first note that $\psi_{Lv,Lw} = L \psi_{vw} L^{-1}$, hence
\begin{eqnarray*}
\Vert (I_{\mathcal{H}} - \psi_{Lv,Lw})f\Vert &=& \Vert (I_{\mathcal{H}} - L \psi_{v,w} L^{-1})f\Vert\\
&=& \Vert L(I_{\mathcal{H}} - \psi_{v,w} )L^{-1}f\Vert\\
& <& \Vert f\Vert
\end{eqnarray*}
for all $f\in\mathcal{H}$. This follows the result.
\end{proof}
\section{Stability of approximate duals}
In frame theory, every $f\in \mathcal{H}$ is represented by the collection of
coefficients $\{ \langle f,f_{i}\rangle\}_{i\in I}$. From these coefficients,
$f$ can be recovered using a reconstruction formula by dual frames.  In real
applications, in these transmissions usually a part of the data vectors
change or reshape in the other words, disturbances affect on the information.
In this respect,  stability of frames and dual frames under perturbations have a key role in
practice.
The stability of approximate dual of discrete frames and g-frames can be found in \cite{app., khosravi}. In the following, we discuss on the stability of approximate alternate dual fusion frames  under some perturbations. First, we fix  definition of perturbation.
\begin{defn}
Let $\lbrace W_{i}\rbrace_{i\in I}$ and $\lbrace \widetilde{W}_{i}\rbrace_{i\in I}$ be closed subspaces in $\mathcal{H}$. Also let $\{ \omega_{i}\}_{i\in I}$ be positive numbers and $\epsilon > 0$. We call $\lbrace (\widetilde{W}_{i}, \omega_{i})\rbrace_{i\in I}$ a $\epsilon$-perturbation of $ \lbrace (W_{i}, \omega_{i})\rbrace_{i\in I}$ whenever for every $f\in \mathcal{H}$,
 \begin{eqnarray*}
\sum_{i\in I} \omega_{i}^{2} \Vert (\pi_{\widetilde{W}_{i}} - \pi_{W_{i}}) f\Vert^{2} < \epsilon \Vert f\Vert^{2}.
\end{eqnarray*}
\end{defn}
\begin{thm}\label{perturbation}
Let $V = \lbrace (V_{i}, \upsilon_{i})\rbrace_{i\in I}$ be  an approximate alternate dual fusion frame of a fusion frame $W = \lbrace (W_{i},\omega_{i})\rbrace_{i\in I}$. Also let $\lbrace (U_{i},\upsilon_{i})\rbrace_{i\in I}$ be a $\epsilon$-perturbation of $V$  , such that
 \begin{eqnarray}\label{M}
\epsilon < \left(\dfrac{ 1 - \Vert I_{\mathcal{H}} - \psi_{vw}\Vert }{\sqrt{B} \Vert S_{W}^{-1}\Vert}\right)^{2},
\end{eqnarray}
where $B$ is the upper bound of $W$. Then $\lbrace (U_{i}, \upsilon_{i})\rbrace_{i\in I}$ is also an approximate alternate dual fusion frame of $W$.
In particular, if $W$ is a Parseval fusion frame and choose $V=W$, then the result holds for $\epsilon<1$.
\end{thm}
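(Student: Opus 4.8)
The plan is to control the quantity $\|I_{\mathcal{H}} - \psi_{uw}\|$, where $\psi_{uw} = T_{U}\phi_{uw}T_{W}^{*} = \sum_{i\in I}\omega_{i}\upsilon_{i}\pi_{U_{i}}S_{W}^{-1}\pi_{W_{i}}$ is the operator attached to the candidate dual $\lbrace(U_{i},\upsilon_{i})\rbrace_{i\in I}$, and to show it stays strictly below $1$, which is exactly the defining inequality for an approximate alternate dual. The natural first move is the triangle inequality
\[
\|I_{\mathcal{H}} - \psi_{uw}\| \leq \|I_{\mathcal{H}} - \psi_{vw}\| + \|\psi_{vw} - \psi_{uw}\|,
\]
so that everything reduces to estimating the difference operator $\psi_{vw} - \psi_{uw} = \sum_{i\in I}\omega_{i}\upsilon_{i}(\pi_{V_{i}} - \pi_{U_{i}})S_{W}^{-1}\pi_{W_{i}}$ in terms of the perturbation parameter $\epsilon$. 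Before that, I would dispose of the preliminary point that $\lbrace(U_{i},\upsilon_{i})\rbrace_{i\in I}$ is genuinely a Bessel fusion sequence; this is routine, bounding $\|\pi_{U_{i}}f\| \leq \|\pi_{V_{i}}f\| + \|(\pi_{V_{i}} - \pi_{U_{i}})f\|$ and applying the triangle inequality in the sequence space together with the Bessel bound of $V$ and the $\epsilon$-smallness of the perturbation.

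The heart of the argument is the bound on $\|\psi_{vw} - \psi_{uw}\|$. For fixed $f,g\in\mathcal{H}$ I would expand $\langle(\psi_{vw} - \psi_{uw})f, g\rangle$ as a sum over $i$, and then transfer the self-adjoint projection difference $\pi_{V_{i}} - \pi_{U_{i}}$ onto the test vector $g$. Applying the Cauchy--Schwarz inequality inside each summand and then a second time across the index $i$ (distributing the weights as $\omega_{i}$ to one factor and $\upsilon_{i}$ to the other) yields the splitting
\[
|\langle(\psi_{vw} - \psi_{uw})f, g\rangle| \leq \Big(\sum_{i\in I}\omega_{i}^{2}\|S_{W}^{-1}\pi_{W_{i}}f\|^{2}\Big)^{1/2}\Big(\sum_{i\in I}\upsilon_{i}^{2}\|(\pi_{V_{i}} - \pi_{U_{i}})g\|^{2}\Big)^{1/2}.
\]
The first factor is at most $\sqrt{B}\,\|S_{W}^{-1}\|\,\|f\|$ by the upper fusion frame bound of $W$, and the second is at most $\sqrt{\epsilon}\,\|g\|$ directly by the $\epsilon$-perturbation hypothesis on $\lbrace(U_{i},\upsilon_{i})\rbrace_{i\in I}$. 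Taking the supremum over $\|g\|=1$ gives $\|\psi_{vw} - \psi_{uw}\| \leq \sqrt{B}\,\|S_{W}^{-1}\|\,\sqrt{\epsilon}$.

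Substituting this into the triangle inequality and invoking the hypothesis (\ref{M}), which is equivalent to $\sqrt{B}\,\|S_{W}^{-1}\|\,\sqrt{\epsilon} < 1 - \|I_{\mathcal{H}} - \psi_{vw}\|$, I conclude $\|I_{\mathcal{H}} - \psi_{uw}\| < 1$, so $\lbrace(U_{i},\upsilon_{i})\rbrace_{i\in I}$ is an approximate alternate dual of $W$. For the special case I would specialize the constants: when $W$ is Parseval, $S_{W} = I_{\mathcal{H}}$ forces $\|S_{W}^{-1}\| = B = 1$, and the choice $V=W$ gives $\psi_{ww} = \sum_{i\in I}\omega_{i}^{2}\pi_{W_{i}}^{2} = \sum_{i\in I}\omega_{i}^{2}\pi_{W_{i}} = S_{W} = I_{\mathcal{H}}$ by idempotency of the projections, so $\|I_{\mathcal{H}} - \psi_{ww}\| = 0$ and (\ref{M}) collapses to $\epsilon < 1$.

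I expect the main obstacle to be the twofold Cauchy--Schwarz estimate in the second paragraph: the essential idea is that moving the projection difference $\pi_{V_{i}} - \pi_{U_{i}}$ onto $g$ is precisely what allows the perturbation hypothesis to enter with the correct weights $\upsilon_{i}$, while the weights $\omega_{i}$ are absorbed by the fusion frame bound of $W$ on the other factor. Getting this pairing of weights right is what produces the clean constant $\sqrt{B}\,\|S_{W}^{-1}\|$ matching the denominator in (\ref{M}).
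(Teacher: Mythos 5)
Your proposal is correct and follows essentially the same route as the paper: the same Bessel-sequence check via the triangle inequality in the sequence space, the same splitting $\Vert I_{\mathcal{H}}-\psi_{uw}\Vert\leq\Vert I_{\mathcal{H}}-\psi_{vw}\Vert+\Vert\psi_{vw}-\psi_{uw}\Vert$, and the same duality/Cauchy--Schwarz estimate moving $\pi_{V_i}-\pi_{U_i}$ onto the test vector to obtain the bound $\sqrt{\epsilon B}\,\Vert S_{W}^{-1}\Vert$. Your expansion of the Parseval case ($S_W=I_{\mathcal{H}}$ gives $\psi_{ww}=S_W=I_{\mathcal{H}}$) is just a more explicit version of the paper's closing remark.
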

\begin{proof}
Notice that $\lbrace (U_{i}, \upsilon_{i})\rbrace_{i\in I}$ is a Bessel fusion sequence, in fact
\begin{eqnarray*}
\sum_{i\in I} \upsilon_{i}^{2} \Vert \pi_{U_{i}} f\Vert^{2} &=& \sum_{i\in I} \upsilon_{i}^{2} \Vert \pi_{V_{i}} f + (\pi_{U_{i}} - \pi_{V_{i}})f\Vert^{2} \\
 &\leq& \left(\left(\sum_{i\in I} \upsilon_{i}^{2} \Vert \pi_{V_{i}}f\Vert^{2}\right)^{1/2} + \left(\sum_{i\in I} \upsilon_{i}^{2} \Vert (\pi_{U_{i}} - \pi_{V_{i}}) f\Vert^{2}\right)^{1/2}\right)^{2} \\
&\leq& (\sqrt{D}+\sqrt{\epsilon})^{2} \Vert f\Vert^{2},
\end{eqnarray*}
where $D$ is the upper bound of $V$. On the other hand,
\begin{eqnarray*}
\Vert (I_{\mathcal{H}} -\psi_{uw})f\Vert &\leq& \Vert (I_{\mathcal{H}} - \psi_{vw})f\Vert + \Vert (\psi_{vw} - \psi_{uw})f\Vert\\
&\leq& \Vert (I_{\mathcal{H}} - \psi_{vw})f\Vert + \sup_{\Vert g\Vert=1}\left\vert {\left\langle \sum_{i\in I} \omega_{i} \upsilon_{i}(\pi_{V_{i}} - \pi_{U_{i}})S_{W}^{-1}\pi_{W_{i}}f, g\right\rangle}\right\vert \\
&=& \Vert (I_{\mathcal{H}} - \psi_{vw})f\Vert + \sup_{\Vert g\Vert=1} \left\vert{\sum_{i\in I} \left\langle \omega_{i} S_{W}^{-1}\pi_{W_{i}}f, \upsilon_{i} (\pi_{V_{i}} - \pi_{U_{i}})g\right\rangle}\right\vert \\
&\leq& \Vert (I_{\mathcal{H}} - \psi_{vw})f\Vert + \sqrt{\epsilon} \left(\sum_{i\in I} \omega_{i}^{2} \Vert S_{W}^{-1}\pi_{W_{i}}f\Vert^{2}\right)^{1/2} \\
&\leq& \Vert (I_{\mathcal{H}} - \psi_{vw})f\Vert +  \sqrt{\epsilon B} \Vert S_{W}^{-1}\Vert \Vert f\Vert< \Vert f\Vert,
\end{eqnarray*}
where the last inequality is implied from (\ref{M}). The rest follows by the fact that each Parseval fusion frame is a dual of itself.
\end{proof}
\begin{ex} Consider
\begin{eqnarray*}
W_{1} = \mathbb{R}^{2} \times \{0\}, \quad W_{2} = \{0\}\times\mathbb{R}^{2}, \quad W_{3} = {\textit{span}}\{(1,0,0)\},
\end{eqnarray*}
\begin{eqnarray*}
V_{1} =  {\textit{span}}\{(0,1,0)\}, \quad V_{2} = \{0\} \times \mathbb{R}^{2}, \quad V_{3} = {\textit{span}}\{(1,0,0)\}.
\end{eqnarray*}
Then $W = \lbrace W_{i}\rbrace_{i=1}^{3}$ is a fusion frame and $\| S_{W}^{-1}\| = 1$ Also, we have $\| I_{\mathcal{H}} - \psi_{vw}\| = \frac{1}{2}$ and so, the Bessel sequence $V = \lbrace V_{i}\rbrace_{i=1}^{3}$ is an approximate alternate dual fusion frame of $W$. Now, if we take
\begin{eqnarray*}
U_{1} =  V_{1}, \quad U_{2} = V_{2}, \quad U_{3} = {\textit{span}}\{(\alpha,\beta,0)\},
\end{eqnarray*}
where $\frac{1}{2} \leq \alpha < 1$ and $0 \leq \beta \leq \frac{1}{100}$, then $U = \lbrace U_{i}\rbrace_{i\in I}$ is a $\epsilon$-perturbation of $V$ with $\epsilon < \frac{1}{8}$. Hence, by Theorem \ref{perturbation}, $U$ is also an approximate alternate dual fusion frame of $W$.
\end{ex}

The next result is obtained immediately from Theorem \ref{perturbation}.
\begin{cor}
Let $\lbrace (V_{i}, \upsilon_{i})\rbrace_{i\in I}$ be  an alternate dual fusion frame of a  fusion frame $W = \lbrace (W_{i}, \omega_{i})\rbrace_{i\in I}$. Also let $\lbrace (U_{i},\upsilon_{i}))\rbrace_{i\in I}$ be a $\epsilon$-perturbation of $V$, and
 \begin{eqnarray}\label{*}
 \sqrt{\epsilon B} \leq \dfrac{1}{\Vert S_{W}^{-1}\Vert},
\end{eqnarray}
where $B$ is the upper bound of $W$. Then $\lbrace (U_{i}, \upsilon_{i})\rbrace_{i\in I}$ is an approximate alternate dual fusion frame of $W$.
\end{cor}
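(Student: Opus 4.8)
The plan is to view this Corollary as the exact-dual specialization of Theorem~\ref{perturbation}. The key observation is that an alternate dual fusion frame is nothing but an approximate alternate dual for which $\psi_{vw}$ is the identity: by the characterization recorded in the introduction, $V = \lbrace (V_{i}, \upsilon_{i})\rbrace_{i\in I}$ being an alternate dual of $W$ is equivalent to $T_{V}\phi_{vw}T_{W}^{*} = I_{\mathcal{H}}$, and since $\psi_{vw} = T_{V}\phi_{vw}T_{W}^{*}$ this says exactly that $\psi_{vw} = I_{\mathcal{H}}$. Hence $\Vert I_{\mathcal{H}} - \psi_{vw}\Vert = 0$.

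With this in hand, I would simply feed $V$ (now regarded as an approximate alternate dual) and its $\epsilon$-perturbation $U$ into Theorem~\ref{perturbation}. Substituting $\Vert I_{\mathcal{H}} - \psi_{vw}\Vert = 0$ into the hypothesis (\ref{M}) makes that bound read $\epsilon < 1/(B\Vert S_{W}^{-1}\Vert^{2})$, while squaring and rearranging the Corollary's hypothesis (\ref{*}) gives $\epsilon B \Vert S_{W}^{-1}\Vert^{2} \leq 1$, i.e. the very same condition. The conclusion of Theorem~\ref{perturbation} then states precisely that $\lbrace (U_{i}, \upsilon_{i})\rbrace_{i\in I}$ is an approximate alternate dual fusion frame of $W$, which is what we want.

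The only delicate point — and the step I would check most carefully — is the mismatch between the strict inequality in (\ref{M}) and the non-strict inequality in (\ref{*}). In the borderline case $\sqrt{\epsilon B}\,\Vert S_{W}^{-1}\Vert = 1$, the chain of estimates at the end of the proof of Theorem~\ref{perturbation} yields only $\Vert (I_{\mathcal{H}} - \psi_{uw})f\Vert \leq \Vert f\Vert$, whereas the definition of approximate alternate dual demands $\Vert I_{\mathcal{H}} - \psi_{uw}\Vert < 1$. I expect to close this gap by noting that with $\psi_{vw} = I_{\mathcal{H}}$ the relevant estimate simplifies to $\Vert (I_{\mathcal{H}} - \psi_{uw})f\Vert \leq \sqrt{\epsilon B}\,\Vert S_{W}^{-1}\Vert\,\Vert f\Vert$ and that the defining inequality of an $\epsilon$-perturbation is itself strict, so a strict inequality persists; otherwise (\ref{*}) should simply be read as the strict bound it is intended to be. Everything else is a direct invocation of Theorem~\ref{perturbation}, so I anticipate no further obstacles.
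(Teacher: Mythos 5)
Your proposal is exactly the paper's route: the paper states this corollary as following immediately from Theorem~\ref{perturbation}, using precisely the observation that an alternate dual has $\psi_{vw}=I_{\mathcal{H}}$, so condition (\ref{M}) reduces to (\ref{*}). Your remark about the borderline case $\sqrt{\epsilon B}\,\Vert S_{W}^{-1}\Vert=1$ is a fair catch of a minor imprecision in the statement (the strict inequality of the perturbation definition need not survive the supremum defining the operator norm), but reading (\ref{*}) as strict, as you suggest, settles it and matches the paper's intent.
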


\begin{thm}\label{perturbation2}
Let $V = \lbrace (V_{i}, \upsilon_{i})\rbrace_{i\in I}$ be  an approximate alternate dual fusion frame of a fusion frame $W = \lbrace (W_{i}, \omega_{i})\rbrace_{i\in I}$. Also let $ \lbrace U_{i}\rbrace_{i\in I}$ be a $\epsilon$-perturbation of $W$ with
\begin{eqnarray}
\sqrt{\epsilon} < \dfrac{1 - (\sqrt{BD}\Vert S_{W}^{-1} - S_{U}^{-1}\Vert + \Vert I_{\mathcal{H}} - \psi_{vw}\Vert)}{\sqrt{D} \Vert S_{U}^{-1}\Vert},
\end{eqnarray}
where $B$ and $D$ are the upper bounds of $W$ and $V$ , respectively.
 Then $\lbrace (V_{i}, \upsilon_{i})\rbrace_{i\in I}$ is also an approximate alternate dual fusion frame of $U = \lbrace (U_{i}, \omega_{i})\rbrace_{i\in I}$.
\end{thm}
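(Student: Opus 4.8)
The goal is to verify the defining inequality $\Vert I_{\mathcal H} - \psi_{vu}\Vert < 1$ for the pair $(V,U)$, where
\[
\psi_{vu}f = T_V\phi_{vu}T_U^{*}f = \sum_{i\in I}\omega_i\upsilon_i\,\pi_{V_i}S_U^{-1}\pi_{U_i}f ,\qquad (f\in\mathcal H).
\]
First I would record that $U=\lbrace(U_i,\omega_i)\rbrace_{i\in I}$ is itself a fusion frame, as is tacitly required for $S_U^{-1}$ to appear in the hypothesis: since $U$ is an $\epsilon$-perturbation of $W$, the triangle inequality in the $\sum_{i}\oplus$-norm gives a lower bound $(\sqrt A-\sqrt\epsilon)^2$ (once $\sqrt\epsilon<\sqrt A$) and an upper bound $(\sqrt B+\sqrt\epsilon)^2$, so $S_U$ is invertible and $\psi_{vu}$ is well defined. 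The main idea, mirroring the proof of Theorem~\ref{perturbation}, is to pass from $\psi_{vu}$ to $\psi_{vw}$ by the triangle inequality
\[
\Vert (I_{\mathcal H}-\psi_{vu})f\Vert \le \Vert (I_{\mathcal H}-\psi_{vw})f\Vert + \Vert (\psi_{vw}-\psi_{vu})f\Vert ,
\]
where the first term is controlled by $\Vert I_{\mathcal H}-\psi_{vw}\Vert$ because $V$ is an approximate alternate dual of $W$.

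The heart of the argument is the estimate of $\Vert \psi_{vw}-\psi_{vu}\Vert$. Writing the difference as
\[
(\psi_{vw}-\psi_{vu})f = \sum_{i\in I}\omega_i\upsilon_i\,\pi_{V_i}\bigl(S_W^{-1}\pi_{W_i}-S_U^{-1}\pi_{U_i}\bigr)f ,
\]
I would insert the cross term $S_U^{-1}\pi_{W_i}$ to split the inner operator as
\[
S_W^{-1}\pi_{W_i}-S_U^{-1}\pi_{U_i} = (S_W^{-1}-S_U^{-1})\pi_{W_i} + S_U^{-1}(\pi_{W_i}-\pi_{U_i}),
\]
producing two sums $\Sigma_1f$ and $\Sigma_2f$. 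This particular choice of cross term is precisely what forces the two quantities $\sqrt{BD}\,\Vert S_W^{-1}-S_U^{-1}\Vert$ and $\sqrt{\epsilon D}\,\Vert S_U^{-1}\Vert$ of the hypothesis, so it is the one step that must be made correctly.

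Each sum is then estimated through the weak formulation $\Vert x\Vert=\sup_{\Vert g\Vert=1}\vert\langle x,g\rangle\vert$, moving the self-adjoint $\pi_{V_i}$ (and, for $\Sigma_2$, also $S_U^{-1}$) onto $g$ and applying Cauchy--Schwarz to the resulting $\sum_i\langle\cdot,\cdot\rangle$. For $\Sigma_1$, pairing $\lbrace\omega_i(S_W^{-1}-S_U^{-1})\pi_{W_i}f\rbrace$ with $\lbrace\upsilon_i\pi_{V_i}g\rbrace$ and using the upper bounds $B$ of $W$ and $D$ of $V$ gives $\Vert\Sigma_1f\Vert\le\sqrt{BD}\,\Vert S_W^{-1}-S_U^{-1}\Vert\,\Vert f\Vert$. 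For $\Sigma_2$, pairing $\lbrace\omega_i(\pi_{W_i}-\pi_{U_i})f\rbrace$ with $\lbrace\upsilon_i S_U^{-1}\pi_{V_i}g\rbrace$ and invoking the $\epsilon$-perturbation bound $\sum_i\omega_i^2\Vert(\pi_{W_i}-\pi_{U_i})f\Vert^2<\epsilon\Vert f\Vert^2$ together with $\Vert S_U^{-1}\Vert$ and the bound $D$ gives $\Vert\Sigma_2f\Vert\le\sqrt{\epsilon D}\,\Vert S_U^{-1}\Vert\,\Vert f\Vert$. Combining,
\[
\Vert (I_{\mathcal H}-\psi_{vu})f\Vert \le \Bigl(\Vert I_{\mathcal H}-\psi_{vw}\Vert + \sqrt{BD}\,\Vert S_W^{-1}-S_U^{-1}\Vert + \sqrt{\epsilon D}\,\Vert S_U^{-1}\Vert\Bigr)\Vert f\Vert .
\]

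Finally, the hypothesis on $\sqrt\epsilon$ is exactly the rearrangement of the requirement that the bracketed constant be strictly less than $1$, which completes the verification of $\Vert I_{\mathcal H}-\psi_{vu}\Vert<1$. I expect the only genuine obstacle to be the routing of projections and adjoints in the two Cauchy--Schwarz steps, so that $V$ always contributes $\sqrt D$, $W$ contributes $\sqrt B$, and the perturbation contributes $\sqrt\epsilon$; once the split is chosen as above, everything else is bookkeeping.
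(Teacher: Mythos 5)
Your proposal is correct and follows essentially the same route as the paper: the triangle inequality through $\psi_{vw}$, the splitting $S_W^{-1}\pi_{W_i}-S_U^{-1}\pi_{U_i}=(S_W^{-1}-S_U^{-1})\pi_{W_i}+S_U^{-1}(\pi_{W_i}-\pi_{U_i})$, and Cauchy--Schwarz on each piece to obtain $\sqrt{BD}\Vert S_W^{-1}-S_U^{-1}\Vert$ and $\sqrt{\epsilon D}\Vert S_U^{-1}\Vert$. Your preliminary observation that $U$ must itself be a fusion frame (so that $S_U^{-1}$ exists) is a sensible addition that the paper leaves implicit, but it does not change the argument.
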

\begin{proof}
Applying the Cauchy Schwarz inequality for every $f\in \mathcal{H}$ we have
\begin{eqnarray*}
\Vert (I_{\mathcal{H}} - \psi_{vu})f\Vert &\leq& \Vert (I_{\mathcal{H}} - \psi_{vw})f\Vert + \Vert (\psi_{vw} - \psi_{vu})f\Vert\\
&\leq& \Vert (I_{\mathcal{H}} - \psi_{vw})f\Vert + \sup_{\Vert g\Vert = 1}\left\vert \left\langle \sum_{i\in I}\omega_{i}\upsilon_{i} \pi_{V_{i}}(S_{W}^{-1}\pi_{W_{i}} - S_{U}^{-1}\pi_{U_{i}})f, g \right\rangle \right\vert\\
&\leq& \Vert (I_{\mathcal{H}} - \psi_{vw})f\Vert + \sup_{\Vert g\Vert = 1}\left\vert \left\langle \sum_{i\in I}\omega_{i}\upsilon_{i}(S_{W}^{-1} - S_{U}^{-1})\pi_{W_{i}}f, \pi_{V_{i}}g \right\rangle\right\vert\\
&+& \sup_{\Vert g\Vert = 1} \left\vert \left\langle \sum_{i\in I}\omega_{i}\upsilon_{i} S_{U}^{-1}(\pi_{W_{i}} - \pi_{U_{i}})f, \pi_{V_{i}}g \right\rangle \right\vert\\
&\leq& \Vert (I_{\mathcal{H}} - \psi_{vw})f\Vert + \sqrt{D}\left(\Vert S_{W}^{-1} - S_{U}^{-1}\Vert \sqrt{B}+\sqrt{\epsilon}\Vert S_{U}^{-1}\Vert\right)\Vert f\Vert\\
&<& \Vert f\Vert,
\end{eqnarray*}
where the last inequality is obtained by the assumption.

\end{proof}

\begin{ex}\label{example}
Consider
\begin{eqnarray*}
V_{1}= \mathbb{R}^{3} ,\quad V_{2} = \{0\}\times\mathbb{R}^{2}, \quad
V_{3} = {span}\lbrace (1,0,0)\rbrace.
\end{eqnarray*}
Then $V = \lbrace V_{i}\rbrace_{i=1}^{3}$ is an alternate dual of Parseval fusion frame $W= \lbrace W_{i}\rbrace_{i=1}^{3}$, in which
\begin{eqnarray*}
W_{1} = \textit{span}\{(0,0,1)\}, \quad W_{2} = \textit{span}\{(0,1,0)\}, \quad W_{3} = \textit{span}\{(1,0,0)\}.
\end{eqnarray*}
On the other hand, letting
\begin{eqnarray*}
U_{1} = W_{1}, \quad U_{2} = W_{2}, \quad U_{3} = {\textit{span}}\{(1,\frac{1}{50},0)\}.
\end{eqnarray*}
Then $ \lbrace U_{i}\rbrace_{i\in I}$ is a $\epsilon$-perturbation of $W$ with $\epsilon < 0.02$. Using the fact that
\begin{eqnarray*}
0.02 < \dfrac{1 - \sqrt{2} \Vert I_{\mathcal{H}} - S_{U}^{-1}\Vert} {\sqrt{2} \Vert S_{U}^{-1}\Vert}.
\end{eqnarray*}
we obtain $V$ is an approximate alternate dual fusion frame of $\lbrace U_{i}\rbrace_{i\in I}$ by Theorem \ref{perturbation2}.
\end{ex}

We know that many concepts of the classical frame theory have not been generalized to the fusion frames. For example, in the duality discussion, if $V = \lbrace (V_{i}, \upsilon_{i})\rbrace_{i\in I}$ is an alternate dual of fusion frame $W = \lbrace (W_{i}, \omega_{i})\rbrace_{i\in I}$, then
 $W$ is not an alternate dual fusion frame of $V$. Indeed, take
\begin{eqnarray*}
W_{1} = \textit{span}\lbrace (1,0,0)\rbrace, \quad W_{2} = \textit{span}\lbrace (1,1,0)\rbrace,\\ \quad W_{3} = \textit{span}\lbrace (0,1,0)\rbrace, \quad
W_{4} = \textit{span}\lbrace (0,0,1)\rbrace,
\end{eqnarray*}
and $\omega_{1} = \omega_{3} = \omega_{4} = 1$, $\omega_{2} =  \sqrt{2}$. Then $W = \lbrace (W_{i}, \omega_{i})\rbrace_{i\in I}$ is a fusion frame for $\mathbb{R}^{3}$ with an alternate dual as $V = \lbrace (V_{i}, \upsilon_{i})\rbrace_{i\in I}$ where
\begin{eqnarray*}
V_{1} = \textit{span}\lbrace (0,1,0)\rbrace, \quad V_{2} = \mathbb{R}^{3},\quad V_{3} = \textit{span}\lbrace (1,0,0)\rbrace,\quad V_{4} = \textit{span}\lbrace (0,0,1)\rbrace,
\end{eqnarray*}
and $\upsilon_{1} = \upsilon_{3} = 3$ , $\upsilon_{2} = 3\sqrt{2}$, $\upsilon_{4} = 1$, see Example 3.1 of \cite{Amiri}. A straightforward calculation shows that  $W$ is not an alternate dual fusion frame of $V$. Moreover,  for an alternate dual fusion frame $V$ of $W$, the fusion frame  $W$ is not an approximate alternate dual fusion frame of $V$, in general. The next theorem gives a sufficient condition for a fusion frame is approximate alternate  dual of its dual.

\begin{thm}\label{dual . app}
 Let $\lbrace (V_{i}, \upsilon_{i})\rbrace_{i\in I}$ be an alternate  dual of fusion frame $\lbrace (W_{i}, \omega_{i})\rbrace_{i\in I}$ such that
 \begin{eqnarray*}\label{dual.app}
\Vert  S_{W}^{-1} - S_{V}^{-1}\Vert < \Vert S_{W}\Vert^{-1/2} \Vert S_{V}\Vert^{-1/2}.
\end{eqnarray*}
Then $\lbrace (W_{i}, \omega_{i})\rbrace_{i\in I}$ is an approximate alternate dual fusion frame of $\lbrace (V_{i}, \upsilon_{i})\rbrace_{i\in I}$.
\end{thm}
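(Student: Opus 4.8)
The plan is to verify the definition directly, i.e. to show that $\|I_{\mathcal{H}}-\psi_{wv}\|<1$, where $\psi_{wv}=T_{W}\phi_{wv}T_{V}^{*}$ is obtained from (\ref{phi}) by interchanging the roles of $W$ and $V$. Writing it out as in the computations preceding Proposition \ref{app.wrt.local}, one has
\[
\psi_{wv}f=\sum_{i\in I}\omega_{i}\upsilon_{i}\pi_{W_{i}}S_{V}^{-1}\pi_{V_{i}}f,\qquad(f\in\mathcal{H}).
\]

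The first key step is to extract an identity from the exact-duality hypothesis, but in its \emph{adjoint} form. Since $V$ is an alternate dual of $W$, the reconstruction formula (\ref{Def:alt}) says precisely that $\psi_{vw}=I_{\mathcal{H}}$, that is $\sum_{i\in I}\omega_{i}\upsilon_{i}\pi_{V_{i}}S_{W}^{-1}\pi_{W_{i}}=I_{\mathcal{H}}$. Taking adjoints and using that $\pi_{W_{i}}$, $\pi_{V_{i}}$ and $S_{W}^{-1}$ are self-adjoint, I would deduce the companion identity
\[
\sum_{i\in I}\omega_{i}\upsilon_{i}\pi_{W_{i}}S_{W}^{-1}\pi_{V_{i}}=I_{\mathcal{H}}.
\]
The crucial observation is that this operator has exactly the same shape as $\psi_{wv}$, differing only in that $S_{W}^{-1}$ stands where $S_{V}^{-1}$ appears. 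Subtracting the two therefore collapses everything into a single sum controlled by $S_{W}^{-1}-S_{V}^{-1}$:
\[
I_{\mathcal{H}}-\psi_{wv}=\sum_{i\in I}\omega_{i}\upsilon_{i}\pi_{W_{i}}\bigl(S_{W}^{-1}-S_{V}^{-1}\bigr)\pi_{V_{i}}.
\]

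The last step is a routine norm estimate. For $f,g\in\mathcal{H}$, moving the self-adjoint $\pi_{W_{i}}$ onto $g$ and applying Cauchy--Schwarz (first on each inner product $\langle (S_{W}^{-1}-S_{V}^{-1})\pi_{V_{i}}f,\pi_{W_{i}}g\rangle$, then over $i$) gives
\[
\bigl|\langle(I_{\mathcal{H}}-\psi_{wv})f,g\rangle\bigr|\le\Vert S_{W}^{-1}-S_{V}^{-1}\Vert\Bigl(\sum_{i\in I}\upsilon_{i}^{2}\Vert\pi_{V_{i}}f\Vert^{2}\Bigr)^{1/2}\Bigl(\sum_{i\in I}\omega_{i}^{2}\Vert\pi_{W_{i}}g\Vert^{2}\Bigr)^{1/2}.
\]
Bounding the two fusion-frame sums by $\Vert S_{V}\Vert\Vert f\Vert^{2}$ and $\Vert S_{W}\Vert\Vert g\Vert^{2}$ and taking the supremum over unit $g$ yields $\Vert I_{\mathcal{H}}-\psi_{wv}\Vert\le\Vert S_{W}\Vert^{1/2}\Vert S_{V}\Vert^{1/2}\Vert S_{W}^{-1}-S_{V}^{-1}\Vert$, which is strictly less than $1$ exactly by the hypothesis. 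Hence $W$ is an approximate alternate dual fusion frame of $V$.

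The main obstacle here is conceptual rather than computational, and it lies in the second step: one must realize that the exact-duality hypothesis should be invoked through its adjoint identity, because only that identity shares the algebraic form of $\psi_{wv}$ and thereby isolates the difference $S_{W}^{-1}-S_{V}^{-1}$ that the hypothesis is designed to control. Once this is seen, the remaining Cauchy--Schwarz and frame-operator bounds are entirely standard.
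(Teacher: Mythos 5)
Your proof is correct and is essentially the paper's argument: the paper also uses the adjoint of the duality identity, in the form $T_{W}\phi_{vw}^{*}T_{V}^{*}=I_{\mathcal{H}}$ with $\phi_{vw}^{*}\lbrace f_{i}\rbrace=\lbrace \pi_{W_{i}}S_{W}^{-1}f_{i}\rbrace$, and then bounds $\Vert I_{\mathcal{H}}-\psi_{wv}\Vert\le\Vert T_{W}\Vert\,\Vert T_{V}\Vert\,\Vert\phi_{wv}-\phi_{vw}^{*}\Vert\le\Vert S_{W}\Vert^{1/2}\Vert S_{V}\Vert^{1/2}\Vert S_{W}^{-1}-S_{V}^{-1}\Vert$. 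Your Cauchy--Schwarz computation is just this same estimate written out pointwise instead of factored through the synthesis operators.
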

\begin{proof}
By the assumption $T_{V} \phi_{vw} T_{W}^{*} = I_{\mathcal{H}}$, where $\phi_{vw}$ is given by (\ref{phi}). Also, it is not difficult to see that $\phi^{*}_{vw} \lbrace f_{i}\rbrace = \lbrace \pi_{W_{i}} S_{W}^{-1} f_{i}\rbrace$ for all $\lbrace f_{i}\rbrace\in \sum_{i\in I}\oplus V_{i}$. Hence
\begin{eqnarray*}
\Vert I_{\mathcal{H}} - T_{W} \phi_{wv} T_{V}^{*}\Vert &=& \Vert T_{W} \phi_{wv} T_{V}^{*} - T_{W} \phi_{vw}^{*} T_{V}^{*}\Vert\\
&\leq& \Vert T_{W}\Vert \Vert T_{V}\Vert \Vert \phi_{wv} - \phi_{vw}^{*}\Vert\\
&\leq& \Vert T_{W}\Vert \Vert T_{V}\Vert \Vert S_{W}^{-1} - S_{V}^{-1}\Vert < 1.
\end{eqnarray*}
\end{proof}
The fusion frame $W$ in Example \ref{example} is not an alternate  dual of $V$, however, a straightforward calculation shows that
\begin{eqnarray*}
\Vert S_{V}^{-1} - S_{W}^{-1}\Vert = \dfrac{1}{2},\quad  \Vert S_{V} \Vert = 2.
\end{eqnarray*}
 Hence, $W$ is an approximate alternate  dual of $V$ by Theorem \ref{dual . app}.
It is worth noticing that, unlike discrete frames, $\lbrace \psi_{wv}^{-1}W_{i}\rbrace_{i=1}^{3}$ is not dual of $\lbrace V_{i}\rbrace_{i=1}^{3}$. Indeed $\psi_{wv}^{-1} = 2 I_{\mathcal{H}}$ and so
\begin{eqnarray*}
\sum_{i\in I} \pi_{\psi_{wv}^{-1}W_{i}} S_{V}^{-1} \pi_{V_{i}} = \dfrac{1}{2} I_{\mathcal{H}}.
\end{eqnarray*}


\bibliographystyle{amsplain}

\end{document}